\documentclass[11pt,a4paper]{amsart}

\usepackage{amsmath}
\usepackage{dsfont}
\usepackage{color}
\usepackage{cancel}
\usepackage{amssymb}
\usepackage[mathscr]{eucal}
\usepackage{parskip}
\usepackage{epstopdf}
\usepackage[marginpar=1.5cm]{geometry}
\usepackage{amsaddr}

\usepackage{graphicx}
\usepackage{float}

\numberwithin{equation}{section}

\newtheorem{theorem}{Theorem}[section]
\newtheorem{definition}[theorem]{Definition}
\newtheorem{lemma}[theorem]{Lemma}
\newtheorem{remark}[theorem]{Remark}

\newtheorem{proposition}[theorem]{Proposition}

\newtheorem{example}[theorem]{Example}

\newcommand{\N}{\mathbb{N}}
\newcommand{\R}{\mathbb{R}}

\newcommand{\mP}{\mathbb{P}}

\newcommand{\T}{\mathbb{T}}
\newcommand{\Z}{\mathbb{Z}}

\newcommand{\fiber}{(\tau, \omega)}

\newcommand{\cH}{\mathcal{H}}

\newcommand{\cF}{\mathcal{F}}

\newcommand{\cB}{\mathcal{B}}

%

%

%
%

%
%

\newcommand{\opintb}[1]{\big(#1\big)}

\newcommand{\set}[1]{\{#1\}}
\newcommand{\setb}[1]{\big\{#1\big\}}
\newcommand{\setB}[1]{\Big\{#1\Big\}}

\newcommand{\vp} {{\varphi}}
\newcommand{\fa} {\text{for all }\,}
\newcommand{\fad} {\quad \text{for all }\,}

\newcommand{\faa} {\text{for almost all }\,}

\newcommand{\dist}{\operatorname{dist}}

\newcommand{\rmd}{\mathrm{d}}


\newcommand{\mand}{\text{ and }}

\newcommand {\na}{nonautonomous }
\newcommand{\rds} {random dynamical system}
\newcommand{\nrds} {nonautonomous random dynamical system}

\parindent0pt
\textwidth16cm
\oddsidemargin0pt
\evensidemargin0pt
\parskip1ex

\begin{document}

\title[A random dynamical systems perspective on stochastic resonance]{A random dynamical systems perspective \\ on stochastic resonance}

\author{Anna Maria Cherubini}
\address{\vspace{-0.3cm}Dipartimento di Matematica e Fisica ``Ennio De Giorgi'', Universit\`a del Salento\\ I-73100 Lecce, Italy}
\author{\vspace{-0.6cm}Jeroen S.W.~Lamb, Martin Rasmussen}
\address{\vspace{-0.3cm}Department of Mathematics, Imperial College London\\ 180 Queen's Gate, London SW7 2AZ, United Kingdom}
\author{\vspace{-0.6cm}Yuzuru Sato}
\address{\vspace{-0.3cm}Department of Mathematics, Hokkaido University\\ Kita 12 Nishi 6, Kita-ku, Sapporo, Hokkaido 060-0812, Japan\\
 London Mathematical Laboratory\\ 14 Buckingham Street, London, WC2N 6DF,United Kingdom}

\date{\today}


\thanks{The authors gratefully acknowledge financial support from the following sources: EPSRC Career Acceleration Fellowship EP/I004165/1 (MR), EPSRC Mathematics Platform Grant (YS), London Mathematical Laboratory external fellowship (YS), JSPS Grant-in-Aid for Scientific Research  No.~24540390 (YS), Russian Science Foundation grant 14-41-00044 at the Lobachevsky University of Nizhny Novgorod (JL). This research has also benefitted from funding from the European Union's Horizon 2020 research and innovation programme for the ITN CRITICS under Grant Agreement number 643073.}

\maketitle

\begin{abstract}
We study stochastic resonance in an over-damped approximation of the stochastic Duffing oscillator from a random dynamical systems point of view.  We analyse this problem in the general framework of \rds s with a nonautonomous forcing. We prove  the existence of a unique global attracting random periodic orbit  and  a stationary  periodic measure.  We use the stationary periodic  measure to define an indicator for the stochastic resonance.

\smallskip
\noindent
\textbf{Keywords.} Markov measures, nonautonomous random dynamical systems, random attractors, stochastic resonance.

\smallskip
\noindent
\textbf{Mathematical Subject Classification.}  37H10, 37H99, 60H10.

\end{abstract}

\section{Introduction}
Stochastic resonance is the remarkable physical phenomenon where a signal that is normally too weak to be detected by a sensor, can be boosted by adding noise to the system. It  has been initially proposed in the context of climate studies,  as an explanation of the recurrence of ice ages \cite{Benzi_81_1,Benzi_82_1,Benzi_83_1,Nicolis_81, Nicolis_82}, and subsequently the  phenomenon has been reported in other fields, such as biology and neurosciences,  and extensively studied in many different physical settings. It is not possible here to  account  for the huge literature on the subject but we  refer to \cite {Gammaitoni_98_1} for a  comprehensive review, while an exhaustive discussion of the literature in different fields can be found in \cite {McDonnell_08_1}. Of particular relevance  are the mathematical studies of the phenomenon  in \cite  {Berglund_06_1} and \cite {Hermann_14_1}.


In this paper, we study one of the models  for stochastic resonance 
from a random dynamical systems point of view. Despite the obvious merit of gaining insight in stochastic processes from a dynamical systems standpoint,  and various research programmes in this direction (see e.g.~\cite{Arnold_98_1}), the mathematical field of random dynamical systems is still in its infancy. Our study of stochastic resonance, as a prototypical dynamical phenomenon in stochastic systems, illustrates how this approach
provides additional insights to phenomena of broad physical interest. In the process, we extend the existing random dynamical systems theory in the direction of nonautonomous stochastic differential equations, to aid the analysis of the particular model at hand. We note in this context that whereas autonomous stochastic differential equations are widely studied, nonautonomous stochastic differential equations received much less attention \cite{Caraballo_03_3,Carvalho_13_1,Crauel_11_1,Cong_02_1} and we also mention  \cite {Wang_14_1,Zhao_09,Zhao_15} for  pioneering work  on random periodic solutions of random dynamical systems.

We study one of the simplest stochastic differential equations used to model stochastic resonance, commonly motivated by taking an overdamped limit of a stochastically driven Duffing oscillator \cite{Gammaitoni_98_1}:
\begin{equation}\label{sr}
 \rmd x= \opintb{\alpha x- \beta x^3}\rmd t+ A \cos \nu t\  \rmd t + \sigma \rmd W_t, \quad \alpha, \beta , \sigma>0, \quad
 x \in \R,
 \end{equation}
where $(W_t)_{t\in\R}$  denotes a Wiener process.
The full model describes
a damped particle in a periodically oscillating double-well potential in the presence of noise. The  periodic  driving tilts the double-well   potential  asymmetrically  up  and down,   raising  and  lowering   the  potential barrier. If the periodic  forcing  alone  is too  weak  for  the  particle  to leave one   potential  well,  the noise strength can be tuned so that   hopping   between   the  wells     is  synchronised with the  periodic  forcing and  the average  waiting  time between  two  noise-induced hops is  comparable with the period of  the  forcing.  For increasing noise strength, the  periodicity is lost and the hopping becomes increasingly random.
It is important to observe that (\ref{sr}) has, in addition to the noise, also an explicit deterministic dependence on time. We refer to such systems as
{\it nonautonomous stochastic differential equations}. In the model at hand, the deterministic time-dependence is periodic, which facilitates the analysis in a crucial way.

We establish a random dynamical systems point of view for nonautonomous stochastic differential equations. In this context we aim to describe the long-time asymptotic behaviour of (\ref{sr}) in terms of (random) attractors and we prove the following: 
\begin{theorem}\label{th1}
The SDE (\ref{sr}) has a unique globally attracting random periodic orbit.
\end{theorem}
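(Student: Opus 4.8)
The plan is to recast \eqref{sr} as a random dynamical system over a suitable driving flow and then invoke a pullback-attractor argument. Because the equation is nonautonomous with periodic forcing, the natural base space is the product $\T \times \Omega$, where $\T = \R/(2\pi/\nu)\Z$ carries the deterministic periodic time-shift and $(\Omega,\cF,\mP,(\theta_t)_{t\in\R})$ is the Wiener shift. First I would subtract off a stationary solution of the linear part: writing $x = y + z$, where $z = z_t(\omega)$ solves the Langevin equation $\rmd z = \alpha z\,\rmd t + \sigma\,\rmd W_t$ in its stationary (Ornstein--Uhlenbeck-type) form, one turns \eqref{sr} into a \emph{random} (pathwise) ODE for $y$ with no white-noise term,
\begin{equation*}
 \dot y = \alpha y - \beta (y+z_t(\omega))^3 + A\cos\nu t,
\end{equation*}
whose right-hand side is, for each fixed $\omega$, a smooth time-dependent vector field. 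This gives a well-defined cocycle $\vp(t,\fiber)$ on $\R$ over the skew-product flow on $\T\times\Omega$, so Theorem~\ref{th1} becomes a statement about this cocycle.

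\textbf{Dissipativity and the pullback attractor.} The key estimate is a dissipative (inward-pointing) bound: multiplying the $y$-equation by $y$ and using $-\beta(y+z)^3 y \le -\tfrac{\beta}{2} y^4 + C(1+|z|^4)$ (Young's inequality) yields
\begin{equation*}
 \tfrac{1}{2}\tfrac{\rmd}{\rmd t}\, y^2 \le -\tfrac{\beta}{2} y^4 + \alpha y^2 + C\bigl(1 + |z_t(\omega)|^4\bigr) + A|y| \le -c\, y^2 + R_t(\omega),
\end{equation*}
with $R_t(\omega)$ a tempered random variable (using that the stationary OU process has moments growing only subexponentially). A Gronwall argument then produces an absorbing random ball $B(\fiber)$ of tempered radius, and since the one-dimensional flow is order-preserving and the absorbing set is an interval, pullback absorption gives a nonempty compact random attractor $\cA\fiber$. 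I would then appeal to the standard existence theorem for random pullback attractors (as in \cite{Arnold_98_1,Crauel_11_1}) — presumably quoted in the paper before this theorem — applied fiberwise over $\T\times\Omega$, to obtain a family $\cA\fiber$ that is invariant under the cocycle and pullback-attracting.

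\textbf{From attractor to a single periodic orbit.} To upgrade the attractor to a \emph{single} random periodic orbit I would use one-dimensionality decisively. Because the scalar cocycle is strictly order-preserving and strongly contracting inside the absorbing interval — indeed $\partial_y(\alpha y - \beta(y+z)^3) = \alpha - 3\beta(y+z)^2$, which is bounded above and, integrated along orbits that spend enough time at large $|y|$, gives a negative Lyapunov exponent — any two solutions starting in the absorbing set converge to each other: $|\,y_1(t)-y_2(t)\,| \le |y_1(0)-y_2(0)|\exp\!\bigl(\int_0^t (\alpha - 3\beta(\,\cdot\,)^2)\,\rmd s\bigr) \to 0$ almost surely. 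Hence $\cA\fiber$ is a singleton $\{a\fiber\}$, and invariance of $\cA$ under the skew-product on $\T\times\Omega$ together with the periodicity of the $\T$-factor translates exactly into $a$ being a random periodic orbit: $a(\tau + 2\pi/\nu, \omega) = a(\tau,\omega)$ in the appropriate equivariant sense, while $\vp(t,\fiber)a\fiber = a(\theta_t\fiber)$. Uniqueness of the periodic orbit is immediate from the contraction estimate (two distinct periodic orbits would have to coincide in the limit), and global attraction follows because the absorbing set is all of $\R$ in the pullback sense. Translating back via $x = y + z$ and noting $z$ itself is driven only by noise (no deterministic period), one recovers the claimed periodic orbit for $x$.

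\textbf{Main obstacle.} The routine parts are the dissipative estimate and the abstract attractor existence; the delicate point is handling temperedness carefully — one must check that the stationary OU-type process $z_t(\omega)$ and the resulting radius $R_t(\omega)$ are genuinely tempered so that the pullback attractor exists in the right (tempered) universe, and that the contraction $\int_0^t(\alpha - 3\beta(\cdot)^2)\,\rmd s \to -\infty$ holds almost surely rather than merely in expectation. Making the latter rigorous — i.e.\ showing orbits in the absorbing set spend enough time in the region where $3\beta(y+z)^2 > \alpha$ — is where the real work lies, and is presumably where the nonautonomous periodic structure and a Birkhoff-type argument on $\T\times\Omega$ are brought to bear.
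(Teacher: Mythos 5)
Your first half (conjugation to a pathwise ODE, dissipative estimate, absorbing ball, pullback attractor) is essentially the paper's Proposition~\ref{srattractor} plus Theorem~\ref{detclosed}, apart from a technical slip: you subtract a ``stationary'' solution of $\rmd z=\alpha z\,\rmd t+\sigma\,\rmd W_t$, which for $\alpha>0$ is an unstable linear SDE whose stationary solution only exists as an anticipating (future-measurable) process; the paper instead conjugates with the stable Ornstein--Uhlenbeck equation $\rmd y=-y\,\rmd t+\sigma\,\rmd W_t$, which keeps the process adapted and tempered. That is fixable. The genuine gap is in the step that collapses the attractor to a point. Your argument rests on the claim that the cocycle is ``strongly contracting inside the absorbing interval,'' with the bound $|y_1(t)-y_2(t)|\le|y_1(0)-y_2(0)|\exp\bigl(\int_0^t(\alpha-3\beta(\cdot)^2)\,\rmd s\bigr)\to 0$ almost surely. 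But the linearization $\alpha-3\beta(x)^2$ is \emph{positive} near the potential barrier, so there is no contraction on the absorbing interval; the integrand is not sign-definite, and proving that its time average along orbits is almost surely negative (a negative top Lyapunov exponent with respect to the periodic invariant measure, or a synchronization-by-noise statement) is precisely the hard content of the theorem. You acknowledge this yourself (``where the real work lies'') and defer it to an unspecified ``Birkhoff-type argument,'' so as written the proof of the singleton property, and hence of uniqueness of the periodic orbit, is not established.

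The paper closes this gap by a different, measure-theoretic route that avoids any pathwise contraction estimate. Since the scalar cocycle is order preserving, the maximum and minimum $a_\pm\fiber$ of each attractor fiber are themselves random periodic orbits; their Dirac measures $\delta_{a_\pm\fiber}$ are invariant Markov measures for the time-$T$ discretisation, and by the correspondence between invariant Markov measures and stationary measures of the associated (nonhomogeneous) Markov semigroup (Theorem~\ref{bijection}, built on Crauel's results for white-noise systems), each corresponds to a stationary periodic measure. Uniqueness of that stationary periodic measure, which follows from Veretennikov's results under the dissipativity condition \eqref{dissipate}, forces $a_-\fiber=a_+\fiber$, so the attractor is a random periodic orbit (Theorem~\ref{onepointattract}). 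If you wish to keep your Lyapunov-exponent strategy, you would need to actually prove the almost-sure negativity of the exponent (e.g.\ via the explicit sign argument available for one-dimensional additive-noise SDEs, adapted to the periodic measure), which is a substantive additional piece of work rather than a routine remark.
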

In terms of the dynamics, if we   denote by $\Phi(t,\tau,\omega)$ the \rds \  induced by (\ref{sr}) on $\R$ from time $\tau$ to $\tau+t$  and  for  a realisation $\omega$ of the Wiener process,
this means that for any bounded set $C\subset \R$, the limit
$\lim_{t\to\infty} \Phi(t,\tau -t,\theta_{-t}\omega)C$ is a single point $A(\tau,\omega)$ for all $ \tau$ and almost all $\omega$.
We note that $A(\tau,\cdot)$ is a random variable that evolves under the stochastic flow as $\Phi(t,\tau,\omega)A(\tau,\omega)=A(\tau+t,\theta_t(\omega))$, with
$A(\tau+T, \omega)=A(\tau,\omega)$ where $T=\frac{2\pi}{\nu}$, justifying the nomenclature \emph{random periodic orbit}.

The attractor provides all the dynamical information for the system and it is accompanied by a natural set of probability measures: first of all a singleton distribution $\delta_{A({\tau,\omega})}$ associated to the random periodic orbit.
It is natural now to consider the measure $\rho_t(B):=\int \delta_{A({t,\omega})}(B){\rm d}\omega$ on  measurable sets $B\subset \R$, for $t\in \R$. By ergodicity, this measure  provides a probabilistic description of orbits of the random fixed
point  starting at time $\tau$ under the time-$T$ map, in the sense that the expected frequency to visit a subset $B\subset \R$ is equal to the $\rho_\tau$-measure of this subset:
\[
\rho_\tau(B)=\lim_{N\to\infty} \frac{1}{N}\sum_{n=1}^N \mathds{1}_{B}(\Phi(nT,\tau,\omega)A(\tau,\omega))
\]
for almost all $\omega$. An illustration of the  density of the random periodic orbit is given in Fig.~\ref{periodicdistrib}: importantly, it depends on $\tau$, and it is $T$-periodic, i.e. $\rho_\tau= \rho_{\tau+T}$, a s a consequence of  \ref{th1}.
 \begin{figure}[H]  \label {periodicdistrib}
  \includegraphics[scale=0.7]{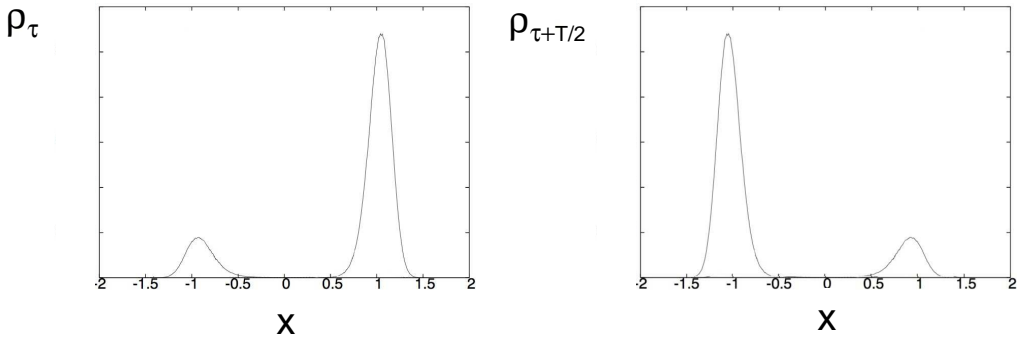}
  \caption{The  Lebesgue density of the measure $\rho_t$ 
   for $t=\tau$ and   $t= \tau+{T\over 2}.$ The values of the parameters are:
$\alpha=\beta=1, \ A=0.12, \ \nu=0.001$, $ \sigma=0.285$ and $\tau=0$.}
\end{figure}

We note that this result depends heavily on observing the orbit at the time step $T$. If a time step $T'$ is incommensurate with the period $T$, then
it is easy to see that
\[\overline{\rho}(B)=
\lim_{N\to\infty} \frac{1}{N}\sum_{n=1}^N \mathds{1}_{B}(\Phi(nT',\tau,\omega)A(\tau,\omega))=
\lim_{t\to\infty} \frac{1}{t}\int_{0}^t \mathds{1}_{B}(\Phi(s,\tau,\omega)A(\tau,\omega))\rmd s.
\]
with $\overline{\rho}:=\frac{1}{T}\int_{0}^T \rho_t \rmd t$. For fundamental research on ergodic theory and probability measures of periodic  random dynamical systems,  see  \cite{Zhao_15}.

\begin{figure}[H]
  \includegraphics[scale=0.65]{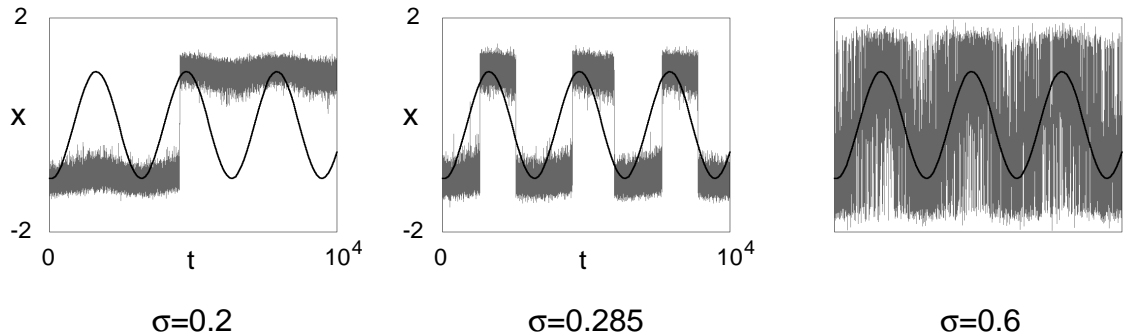}
   \caption{Time series of orbits for \eqref{sr} at increasing values of noise ($\sigma=0.2, 0.285, 0.6$). The case $\sigma=0.285$  corresponds to stochastic resonance. The graph of the nonautonomous driving term $A\cos(\nu t)$ is also depicted for reference.} 
   \label {figsr}
\end{figure}

We now proceed to discuss the phenomenon of stochastic resonance within the above point of view. Stochastic resonance in the context of our model \eqref{sr} is measured in terms of the enhancement of the periodic behaviour of the nonautonomous forcing as a function of the size of the noise. In Fig.~\ref {figsr} we present representative time series of \eqref{sr}  for three different noise amplitude levels  $\sigma$: before, during and after the stochastic resonance regime. The phenomenon of stochastic resonance is characterized by a $T$-period hopping between the left and right potential wells, present in the middle plot but absent in the leftmost and rightmost plots.

The resonant regime can be identified in various ways. The classical experimental indicator is the signal to noise ratio, see e.g.~\cite{Gammaitoni_98_1}.
Our establishment of the existence of a unique globally attracting random periodic point with invariant measures $\rho_t$  provides the opportunity to define other indicators with more mathematically rigorous footing. As proposed already by  \cite {Gammaitoni_98_1} (but without a rigorous discussion of existence), one can for instance consider the expectation
$\bar x(t) =\max_{0\le t\le T}  \int_\R x \rmd \rho_t(x)$, which due to the periodicity of $\rho_t$ is also $T$-periodic. The size of the amplitude of this oscillating function,
$\bar x=\max_{0\le t< T} | \bar{x}(t)|$ is a natural indicator for stochastic resonance.

However, as $\bar{x}$ does not really measure the likelihood of a time series to hop from left to right in resonance with the driving frequency, we here propose an alternative indicator which directly relates to the amount of transport between the wells across the barrier at $x=0$ over a time period $T$. Define the two probabilities
\begin{displaymath}
p^-:=\frac{\max_{0\le t < T} \rho_t ((-\infty,0] ) -\min_{0\le t < T} \rho_t ((-\infty,0] )}{\max_{0\le t < T} \rho_t ((-\infty,0] ) }
\end{displaymath}
and 
\begin{displaymath}
p^+:=\frac{\max_{0\le t < T} \rho_t ([0,\infty) ) -\min_{0\le t < T} \rho_t ([0,\infty) )}{\max_{0\le t < T} \rho_t ([0,\infty) ) }\,,
\end{displaymath}
and note that $p^-$ is a lower bound for the probability for a particle to move from the left to the right well, while $p^+$ is a lower bound for the probability for a particle to move from the right to left. In general, these two probabilities do not coincide, although they do for the stochastic differential equation \eqref{sr}. The product of these two probabilities
\begin{displaymath}
  p:= p^-p^+
\end{displaymath}
is a lower bound for a particle to switch the well two times within the period $T$, and thus serves as an indicator for stochastic resonance.
In Fig.~\ref{response} we present a comparison between the indicators $p$ and $\bar{x}$ for different values of the noise strength $\sigma$, showing that both maximize
at the same noise strength in this specific example.
\begin{figure} [H]
  \includegraphics[scale=0.6]{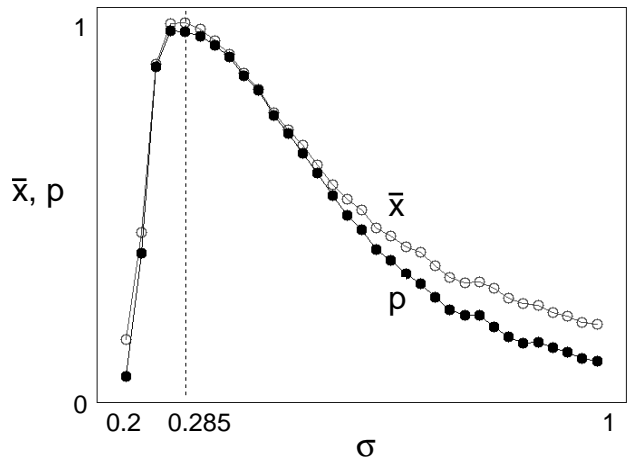}
  \caption{The indicators $p$ (filled dot) and $\bar x$ (empty dot) as a function of $\sigma$. Both indicators are maximised in the resonant regime.}
  \label{response}
\end{figure}

The paper is organised as follows. In Section 2, we define the framework for nonautonomous random dynamical systems.
In  Section 3, we prove the existence of global nonautonomous random attractors for a class of \na stochastic differential equations including \eqref {sr};  more general results on the existence of attractors for \nrds s are developed in the Appendix. In Section 4, we define periodic measures for  nonautonomous random dynamical systems and prove that  for the class of systems defined in Section 3 there exists a unique attracting random periodic orbit.

\textbf{Acknowledgments.} The authors would like to thank Sergey Zelik (University of Surrey) and Armen Shirikyan (Universit\'e de Cergy-Pontoise) for useful discussions. 

\section{Nonautonomous \rds s}

In this section, we define the fundamental objects we need to study stochastic resonance in the framework of the theory of random dynamical systems.  Similarly to the autonomous case \cite {Arnold_98_1}, the noise of a nonautonomous random dynamical system is modelled by a base flow $\theta.$ Let $(\Omega,\cF,\mP)$ be a probability space with a $\sigma$-algebra $\cF$ and a probability measure $\mP$, and let $\T$ be a time set (given by either $\R$ or $\Z$). A $(\cB(\T)\otimes \cF, \cF)$-measurable function $\theta:\T\times \Omega \to \Omega$  is called \emph{measurable dynamical system} if $\theta(0,\omega) =\omega$ and $\theta(t+s,\omega) =  \theta(t, \theta(s, \omega))$ for all $t,s\in\T$ and $\omega\in\Omega.$ We will assume that $\theta$ is \emph{measure preserving} or \emph{metric}, i.e.~$\mP\theta(t,A) = \mP A$ for all $t\in\T$ and $A\in\cF$, and we will call $\theta$ \emph{ergodic}  if  the invariant sets for the flow have trivial measure. We will use the abbreviation $\theta_t\omega$ for $\theta(t,\omega).$

In contrast to the autonomous case, the dynamics of nonautonomous \rds s depends also on the initial time, rather than only on $\omega\in\Omega$ and the elapsed time.

\begin{definition} [Nonautonomous and periodic random dynamical system]  \label{nRDS}
  Let $X$ be a Polish space with metric $d.$ A \emph{nonautonomous random dynamical system} on $X$ is a pair $(\theta, \Phi)$, where $\theta:\T\times \Omega \to \Omega$ is a metric dynamical system defined on the probability space $(\Omega,\cF,\mP)$, and the so-called \emph{cocycle} $\Phi: \T \times \T \times \Omega \times X \to X$ is a $(\cB(\T) \otimes \cB(\T) \otimes \cF \otimes \cB(X),\cB(X))$-measurable mapping with the following properties:
  \begin{itemize}
    \item[(i)] $\Phi(0,\tau,\omega,x)= x$ for all $\tau \in \T$, $x\in X$ and  $\faa \omega \in \Omega$,
    \item[(ii)] $\Phi(t+s,\tau, \omega,x) = \Phi(t,\tau + s, \theta_s\omega, \Phi(s,\tau, \omega,x))$ for all $t,s, \tau \in \T$, $x\in X$ and for almost all $\omega\in\Omega.$
  \end{itemize}
  We assume that $\Phi(\cdot,\cdot, \omega,\cdot): \T \times \T \times X \mapsto \T \times \T \times X $ is continuous for almost all $\omega \in \Omega$, and we will use the notation $\Phi(t,\tau, \omega)x$ for $\Phi(t,\tau, \omega,x).$ The nonautonomous random dynamical system $(\theta,\Phi)$  is called \emph{periodic random dynamical system} if there exists a $T>0$ such that
  \begin{displaymath}
    \Phi(t,\tau + T , \omega,x) = \Phi(t,\tau, \omega,x) \fad t,\tau \in \T, x\in X \mand \faa \omega\in\Omega\,.
  \end{displaymath}
\end{definition}

We are mainly interested in continuous-time nonautonomous random dynamical systems, which are generated by a stochastic differential equation (SDE for short) of the form
\begin{equation}\label{sde}
  \rmd x=f(t,x) \rmd t+ \sigma \rmd W_t
\end{equation}
where $ t,x \in \R \mand  \sigma>0$, $(W_t)_{t\in\R}$ is a Wiener process and $f:\R^2\to\R$ is continuously differentiable. For conditions on existence and uniqueness of  global solutions of \eqref {sde}, see  \cite {Arnold_74_1, Prevot_03_1}. In this case, the underlying  model for the noise is given by the Wiener space  $\Omega:= C_0(\R,\R):=\set{\omega\in C(\R,\R): \omega(0)=0}$ equipped with the compact-open topology and the Borel $\sigma$-algebra $\cF:=\cB(C_0(\R, \R)).$ $\mP$ is the Wiener probability measure on $(\Omega, \cF)$ and the evolution of noise is described by the Wiener shift $\theta: \R\times \Omega\to\Omega$, defined by $\theta(t, \omega(\cdot)):= \omega(\cdot+t)-\omega(t).$ The shift is ergodic \cite{Arnold_98_1}. The cocycle is defined by
\begin{equation}\label{Phi}
  \Phi ( t, \tau,\theta_{\tau}\omega, x):=  \mathcal X(t+\tau,\tau,\omega,x)\,,
\end{equation}
where $\mathcal X(t,\tau,\omega,x)$ is the stochastic flow of \eqref{sde}, i.e.~a pathwise solution for an  initial time $\tau\in\R$ and initial condition $x\in \R.$

The nonautonomous random dynamical system $\Phi $ is periodic if the function $f$ is periodic in $t.$ Note that $\Phi $ is \emph{order preserving}, i.e.~for $x,y \in \R$ with $x\le y$, we have
$\Phi(t,\tau, \omega,x) \le \Phi(t,\tau, \omega,y)$ for all $t,\tau \in \T$ and almost all $\omega \in \Omega.$

The following example describes how four homeomorphisms generate a periodic random dynamical system in discrete time.

\begin{example}[Discrete-time periodic random dynamical system]\label{discrete}
  Consider a metric space $X$ and four homeomorphisms $h^i_j:X\to X$, where $i,j\in\set{0,1}.$ We want to study the random dynamics if $h^i_j$ is used with probability $p_j \in[0,1]$ at either even times ($i=0$) or odd times ($i=1$). We assume that $p_0+p_1 = 1.$ We define $\Omega:=\setb{\omega = (\dots, \omega_{-1},\omega_0,\omega_1, \dots):\omega_i\in\set{0,1}}.$ For fixed $x_1,\dots,x_n\in \set{0,1}$, the set
  $$I_{x_0,\dots,x_{n}}:=\setb{\omega\in \Omega:\omega_i = x_i \text{ for all }  i\in\set{0,\dots,n}  }$$
  is called a \emph{cylinder set}. The set of cylinder sets forms a semi-ring, and we define $\cF$ to be the $\sigma$-algebra generated by this semi-ring.
  We define $\mP$ on cylinder sets by $\mP (I_{x_0,\dots,x_n}):=\prod_{i=0}^n p_{x_i}$, and then we extend $\mP$ to $\cF.$ The dynamics on $\Omega$ is given by the left shift $(\theta(\omega))_i=\omega_{i+1}$, and we
  define the cocycle by $\vp(1,m,\omega,x):=(h_{\omega_0}^{m\hspace{-0.1cm}\mod 2})(x)$ for $m\in\Z.$ The nonautonomous random dynamical system ($\theta,\vp$) is two-periodic.
\end{example}



Let $(\Omega,\cF,\mP)$ be a probability space, and let $\T$ be a time set and $X$ be a Polish space. A $(\cB(\T) \otimes \cF \otimes \cB(X))$-measurable set $M\subset \T \times \Omega \times X$ is called a \emph{nonautonomous random set}, and the set $M(\tau,\omega):=\{x\in X: (\tau,\omega,x)\in M \}$ is called the $(\tau,\omega)$-\emph{fiber} of $M.$ If every fiber of $M$ is closed (compact, or bounded, respectively), then $M$ is called \emph{closed} (\emph{compact}, or \emph{bounded}, respectively). If all fibers are singletons, we will call $M$ a \emph{nonautonomous random point}.

A nonautonomous random set $M$ is called \emph{invariant} with respect to a nonautonomous random dynamical system $(\theta,\Phi)$ if
\begin{displaymath}
  \Phi(t,\tau, \omega) M(\tau,\omega) = M(\tau + t,\theta_t\omega) \fad t, \tau, \in \T \text{ and almost all }\omega\in\Omega\,.
\end{displaymath}
Invariant nonautonomous random sets can be constructed easily. For instance, given $x\in X$, the set defined by $M(\tau, \omega):=\Phi(\tau,0,\theta_{-\tau}\omega,x)$ for $\tau\in \T$ and $\omega\in\Omega$ is invariant.

While the construction of invariant nonautonomous random sets is straightforward, so-called invariant periodic random sets, as defined below, are nontrivial objects.

\begin{definition} [Periodic random sets and random periodic orbits]
  An invariant nonautonomous random set $M$ is called an \emph{invariant periodic random set} if  there exists a $T>0$ such that
  \begin{displaymath}
    \Phi(T,\tau, \omega) M(\tau,\omega) = M(\tau,\theta_T\omega)  \fad \tau \in \T \text{ and almost all } \omega\in\Omega\,.
  \end{displaymath}
  An invariant periodic random set is called \emph{random periodic orbit} if it is a nonautonomous random point, i.e.~its fibers are singletons.
\end{definition}

\section{Global nonautonomous random attractors}\label{sec1}

In this section, we introduce global nonautonomous random attractors, which are invariant nonautonomous random sets that attract deterministic bounded sets. Note that in the Appendix, we develop the theory also to include the attraction of nonautonomous random sets.

\begin{definition}[Global nonautonomous random attractor]
  Let $(\theta,\Phi)$ be a nonautonomous random dynamical system on a Polish space $(X,d)$. A compact and invariant nonautonomous random set $A$ is called \emph{global nonautonomous random attractor} if for all bounded sets $C\subset X$, we have
  \begin{displaymath}
   \lim_{t\to\infty} \dist\left( \Phi(t,\tau -t,\theta_{-t}\omega)C, A(\tau,\omega)\right) = 0  \fad \tau\in \T  \text{ and almost all }  \omega\in \Omega\,,
  \end{displaymath}
  where $\dist(D_1,D_2):= \sup_{x\in D_1} \inf_{y\in D_2} d(x,y)$ is the Hausdorff semi-distance of two sets $D_1, D_2\subset X.$
\end{definition}

A sufficient condition for the existence of a global attractor is given by the existence of an \emph{absorbing set}, which is a compact nonautonomous random set $B$ such that for all bounded sets $C\subset X$, all $\tau \in \T$ and almost all $\omega\in\Omega$, there exists a time $T=T(C,\tau,\omega) >0$ such that
\begin{displaymath}
  \Phi(t,\tau -t,\theta_{-t}\omega)C\subset B (\tau,\omega)\fad t\ge T\,.
\end{displaymath}

\begin{theorem}[Existence of global nonautonomous random attractors]\label{detclosed}
  Let $(\theta,\Phi)$ be a nonautonomous random dynamical system on a Polish space $(X,d)$, and suppose that there exists an absorbing set $B.$ Then there exists a global nonautonomous random attractor $A$, given by the omega-limit set of $B$:
  \begin{displaymath}
    A(\tau, \omega) = \bigcap_{T \ge 0} \overline{\bigcup_{t\ge T} \Phi(t,\tau - t, \theta_{-t} \omega)B(\tau -t,\theta_{-t} \omega) } \fad \tau\in \T   \text{ and almost all } \omega\in \Omega\,.
  \end{displaymath}
  Note that $A$ is minimal in the sense that if there is another global nonautonomous random attractor $\tilde A$, then $A(\tau, \omega) \subset \tilde A(\tau, \omega)$ for all $\tau\in\T$ and almost all $\omega\in\Omega$.
  If $X$ is connected, then the fibers of $A$ are connected.
\end{theorem}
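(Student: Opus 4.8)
The plan is to prove Theorem~\ref{detclosed} in the classical way for pullback attractors (cf.~\cite{Crauel_11_1}), adapting the standard deterministic/random attractor machinery to the nonautonomous-random setting, and then to address the two additional assertions (minimality and connectedness of fibers) separately. The candidate attractor is the pullback omega-limit set of the absorbing set $B$, namely
\begin{displaymath}
  A(\tau,\omega) = \bigcap_{S\ge 0} \overline{\bigcup_{t\ge S} \Phi(t,\tau-t,\theta_{-t}\omega) B(\tau-t,\theta_{-t}\omega)}\,,
\end{displaymath}
and the work consists in verifying that $A$ is (a) a nonautonomous random set with nonempty compact fibers, (b) invariant, (c) attracting, and (d) minimal, plus the connectedness addendum.

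First I would establish nonemptiness and compactness of the fibers. Fix $\tau\in\T$ and a full-measure set of $\omega$; since $B$ is absorbing, for $t$ large the sets $\Phi(t,\tau-t,\theta_{-t}\omega)B(\tau-t,\theta_{-t}\omega)$ lie inside the compact set $B(\tau,\omega)$, so the pullback omega-limit set is a decreasing intersection (for $S$ large) of nonempty closed subsets of a compact set, hence nonempty and compact. Equivalently, one shows $x\in A(\tau,\omega)$ iff there are sequences $t_n\to\infty$ and $x_n\in B(\tau-t_n,\theta_{-t_n}\omega)$ with $\Phi(t_n,\tau-t_n,\theta_{-t_n}\omega)x_n\to x$; compactness of $B(\tau,\omega)$ gives a convergent subsequence, producing a point of $A(\tau,\omega)$. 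Measurability of $(\tau,\omega)\mapsto A(\tau,\omega)$ follows from the measurability of $\Phi$ and of $B$ together with the usual projection/measurable-selection arguments for omega-limit sets in Polish spaces; I would cite the analogous statements from \cite{Crauel_11_1,Arnold_98_1} rather than redo the selection theory.

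Next, invariance: one shows $\Phi(t,\tau,\omega)A(\tau,\omega) = A(\tau+t,\theta_t\omega)$ using the cocycle property (ii) and continuity of $\Phi$. The inclusion ``$\subseteq$'' uses continuity to pass the limit through $\Phi(t,\tau,\omega)\cdot$; the reverse inclusion ``$\supseteq$'' uses that any $y\in A(\tau+t,\theta_t\omega)$ arises as a limit of $\Phi(s_n,\tau+t-s_n,\theta_{-s_n}\theta_t\omega)x_n$ with $x_n\in B$, and for $s_n\ge t$ one rewrites this via the cocycle law as $\Phi(t,\tau,\omega)$ applied to $\Phi(s_n-t,\tau-(s_n-t),\theta_{-(s_n-t)}\omega)x_n$, whose (subsequential) limit lies in $A(\tau,\omega)$ by compactness. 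Attraction is then the standard contradiction argument: if $\dist(\Phi(t,\tau-t,\theta_{-t}\omega)C,A(\tau,\omega))\not\to 0$ for some bounded $C$, pick $t_n\to\infty$ and $x_n\in C$ with $\Phi(t_n,\tau-t_n,\theta_{-t_n}\omega)x_n$ bounded away from $A(\tau,\omega)$; absorption puts these points eventually in the compact $B(\tau,\omega)$, so a subsequence converges to a point which by definition belongs to $A(\tau,\omega)$ — a contradiction. Minimality follows because if $\tilde A$ is another attractor then attraction of $B(\tau-t,\theta_{-t}\omega)$ (a bounded — in fact compact — set) towards $\tilde A$, combined with closedness of $\tilde A(\tau,\omega)$ and the characterization of $A(\tau,\omega)$ as a set of pullback limits from $B$, forces $A(\tau,\omega)\subseteq\tilde A(\tau,\omega)$. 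Finally, for connectedness of the fibers when $X$ is connected, I would use that $\Phi(t,\tau-t,\theta_{-t}\omega)$ is continuous and $X$ is connected, so $\Phi(t,\tau-t,\theta_{-t}\omega)X$ is connected; taking a suitable bounded (or exhausting) connected set and then a nested-intersection argument — each pullback image being connected and contained in a fixed compact set, with the intersection of a decreasing family of compact connected sets in a metric space being connected — yields connectedness of $A(\tau,\omega)$.

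The main obstacle I expect is not any single topological step — those are routine — but the measurability of $\tau\mapsto$, $\omega\mapsto A(\tau,\omega)$, i.e.~verifying that $A$ is genuinely a nonautonomous \emph{random} set: one must check that the fiberwise intersection over the continuous parameter $S\ge 0$ and union over $t\ge S$ of the measurable maps $(\tau,\omega)\mapsto\Phi(t,\tau-t,\theta_{-t}\omega)B(\tau-t,\theta_{-t}\omega)$ produces a measurable multifunction, which requires either a careful reduction to countable operations (using continuity of $\Phi$ in $t$ to restrict to rational times) or an appeal to a measurable projection theorem. I would handle this by the rational-times reduction together with the measurable-selection results already available in the random attractor literature, so that the bulk of the proof reduces to the clean compactness and cocycle arguments sketched above.
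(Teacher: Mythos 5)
Your overall route---verifying directly that the pullback omega-limit set of $B$ has nonempty compact fibers, is invariant, attracting and minimal---is essentially the route the paper takes, except that the paper packages it through the appendix machinery (Lemmas~\ref{omegainvariance} and~\ref{lemmacompact} on omega-limit sets, then the general $\cH$-attractor result, Theorem~\ref{existencecond}, for inclusion-closed families, of which Theorem~\ref{detclosed} is a corollary). The substantive problem is that two of your steps silently use exactly the non-routine content of Lemma~\ref{lemmacompact}. First, in your nonemptiness/compactness argument you assert that, for $x_n\in B(\tau-t_n,\theta_{-t_n}\omega)$, the points $\Phi(t_n,\tau-t_n,\theta_{-t_n}\omega)x_n$ eventually lie in the compact fiber $B(\tau,\omega)$. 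That is the statement that $B$ absorbs \emph{itself}, which is not among the hypotheses: $B$ is only assumed to absorb deterministic bounded sets $C\subset X$, and its pullback fibers $B(\tau-t,\theta_{-t}\omega)$ need not be uniformly bounded in $t$. The paper does not get this for free either: in Theorem~\ref{existencecond} the compact set $K$ is assumed to lie in the family $\cH$ and to absorb every member of $\cH$, hence itself, and the deduction of Theorem~\ref{detclosed} leans on the omega-limit sets $\Omega_C$ of deterministic bounded sets, whose images do land in the fixed compact fiber $B(\tau,\omega)$ by the stated absorption property.

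Second, and more clearly a gap: in your attraction-by-contradiction argument the subsequential limit of $\Phi(t_n,\tau-t_n,\theta_{-t_n}\omega)x_n$ with $x_n\in C$ belongs, by definition, to $\Omega_C(\tau,\omega)$, \emph{not} to $A(\tau,\omega)=\Omega_B(\tau,\omega)$, because the $x_n$ are taken from $C$ rather than from the fibers of $B$. To conclude you need the inclusion $\Omega_C(\tau,\omega)\subset \Omega_B(\tau,\omega)$, which is precisely part (iii) of Lemma~\ref{lemmacompact} and is proved there by a re-routing argument: for every $\tilde T\ge 0$ and all large $n$ one factors $\Phi(t_n,\tau-t_n,\theta_{-t_n}\omega)x_n=\Phi(\tilde T,\tau-\tilde T,\theta_{-\tilde T}\omega)\,\Phi(t_n-\tilde T,\tau-t_n,\theta_{-t_n}\omega)x_n$ and uses absorption at the intermediate pullback time to place the inner point in $B(\tau-\tilde T,\theta_{-\tilde T}\omega)$, so the limit lies in $\overline{\bigcup_{t\ge\tilde T}\Phi(t,\tau-t,\theta_{-t}\omega)B(\tau-t,\theta_{-t}\omega)}$ for every $\tilde T$. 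Without this step, ``by definition belongs to $A(\tau,\omega)$'' is false as written. A smaller remark: your connectedness sketch needs continuity of $t\mapsto\Phi(t,\tau-t,\theta_{-t}\omega)x$, which is not available since $\omega$ varies with $t$ and no continuity in $\omega$ is assumed; the paper's appendix proof does not address connectedness at all, so this point is orthogonal, but your argument as sketched would not close it either.
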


We prove this theorem in a more general form in the Appendix.


We now apply this result to show the existence of a nonautonomous random global attractor for a class of periodic random dynamical systems.
In particular, we consider the stochastic differential equation~\eqref{sde}, given by
\begin{displaymath}
  \rmd x=f(t,x) \rmd t+ \sigma \rmd W_t
\end{displaymath}
where $ t,x \in \R \mand  \sigma>0$, $(W_t)_{t\in\R}$ is a Wiener process and $f:\R^2\to\R$ is continuously differentiable. Let $\Phi$ denote the cocycle of the
corresponding nonautonomous random dynamical system as defined in \eqref{Phi}.
We assume the following two conditions:

\emph{1. Dissipativity condition.} There exist constants $L_1, L_2 \ge 0$ such that
\begin{equation}\label{dissipate}
  (x_1-x_2)\big(f(t,x_1)-f(t,x_2)\big) \le L_1 -L_2 \left|x_1 -x_2\right|^2\fad t\in\R \mand x_1,x_2\in\R\,.
\end{equation}

\emph{2. Integrability condition.} There exists $C_0>0$ such that
\begin{equation} \label{integrate}
  \int_{-\infty} ^{t} e^{cr}\big| f(r,u(r))\big|^2 \,\rmd r<\infty
\end{equation}
for all $t\in\R$, $0<c<C_0$ and continuous functions $u:\R\to\R$ with sub-exponential growth.

We note that these two conditions are satisfied when $f(t,x)= \alpha x- \beta x^3+ A \cos \nu t$, cf.  \eqref{sde}.

\begin {proposition}\label{srattractor}
  A nonautonomous random dynamical system generated by the SDE \eqref{sde}, where $f$ satisfies both the dissipativity and integrability condition, has a global nonautonomous random attractor.
\end {proposition}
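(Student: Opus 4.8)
The strategy is to verify the hypothesis of Theorem~\ref{detclosed} by constructing a compact absorbing set $B$ for the cocycle $\Phi$ generated by \eqref{sde}. Since $X = \R$, compact fibers means bounded closed intervals, so it suffices to produce a random variable $R(\tau,\omega) > 0$, depending measurably on $(\tau,\omega)$, such that for every bounded $C \subset \R$ and almost every $\omega$ there is a time $T(C,\tau,\omega)$ with $\Phi(t,\tau-t,\theta_{-t}\omega)C \subset [-R(\tau,\omega), R(\tau,\omega)]$ for all $t \ge T$; the absorbing set is then $B(\tau,\omega) := [-R(\tau,\omega), R(\tau,\omega)]$.

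The main device is the standard change of variables that removes the additive noise. Introduce the stationary Ornstein--Uhlenbeck process solving $\rmd z = -z\,\rmd t + \sigma\,\rmd W_t$, realised pathwise as $z(\tau,\omega) = -\sigma\int_{-\infty}^{0} e^{r}\,\theta_\tau\omega(r)\,\rmd r$ (or the analogous stationary expression), so that $z(t,\omega)$ has sub-exponential growth in $t$ for almost every $\omega$, by the ergodic theorem applied to the OU process. Writing $v(t) = x(t) - z(t)$ for a solution $x$ of \eqref{sde}, one obtains the random \emph{ordinary} differential equation
\begin{displaymath}
  \dot v = f(t, v + z(t,\omega)) + z(t,\omega)\,,
\end{displaymath}
which is pathwise non-autonomous but noise-free. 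Now estimate $\tfrac{\rmd}{\rmd t}\tfrac12 v^2 = v\big(f(t,v+z) + z\big)$. Comparing $f(t,v+z)$ with $f(t,z)$ via the dissipativity condition \eqref{dissipate} (take $x_1 = v + z$, $x_2 = z$) gives $v\big(f(t,v+z) - f(t,z)\big) \le L_1 - L_2 v^2$, hence
\begin{displaymath}
  \frac{\rmd}{\rmd t}\,\frac12 v^2 \le L_1 - L_2 v^2 + v\big(f(t,z(t,\omega)) + z(t,\omega)\big) \le L_1 - \frac{L_2}{2} v^2 + \frac{1}{2L_2}\big(f(t,z(t,\omega)) + z(t,\omega)\big)^2\,.
\end{displaymath}
A Gronwall / variation-of-constants argument then yields, for a solution started at time $\tau - t$,
\begin{displaymath}
  v(\tau)^2 \le e^{-L_2 t}\,v(\tau - t)^2 + \frac{1}{L_2}\int_{\tau - t}^{\tau} e^{-L_2(\tau - r)}\,\big(f(r, z(r,\omega)) + z(r,\omega)\big)^2\,\rmd r\,,
\end{displaymath}
and after shifting $\omega \mapsto \theta_{-t}\omega$ to match the pullback convention the first term tends to $0$ as $t \to \infty$ uniformly for $v(\tau - t)$ ranging over the bounded set $C - z$, while the integral converges (as $t \to \infty$) to a finite random variable because the integrability condition \eqref{integrate}, applied to the sub-exponential continuous function $u(r) = z(r,\omega)$, controls $\int_{-\infty}^{\tau} e^{cr}|f(r,z(r,\omega))|^2\,\rmd r$, and the $z^2$ part is integrable against the exponential weight by sub-exponential growth of $z$. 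Undoing the substitution $x = v + z$ and using $|z(\tau,\omega)|<\infty$ gives the desired bound $R(\tau,\omega)$; measurability of $R$ in $(\tau,\omega)$ follows from measurability of the OU process and of the integral. Theorem~\ref{detclosed} then produces the global nonautonomous random attractor.

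The step I expect to require the most care is matching the Ornstein--Uhlenbeck conjugation with the cocycle's pullback convention $\Phi(t,\tau-t,\theta_{-t}\omega)$ in Definition~\ref{nRDS} and Theorem~\ref{detclosed}: one must track exactly how $\theta$ acts on the stationary process $z$ under the time-shift, so that the exponentially decaying initial term genuinely involves the remote past $\tau - t \to -\infty$ and the integrability condition \eqref{integrate} (which is stated with lower limit $-\infty$ and weight $e^{cr}$ for $0 < c < C_0$) applies with $c$ chosen below $\min\{L_2, C_0\}$. Everything else — the dissipativity estimate, the Gronwall bound, and checking that $f(t,x) = \alpha x - \beta x^3 + A\cos\nu t$ satisfies both conditions (with $L_2$ any number below $\alpha$ after absorbing the cubic term, which only helps, and $C_0$ arbitrary since $z$ is sub-exponential and $f$ is polynomial in its second argument) — is routine.
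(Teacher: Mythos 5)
Your proposal is correct and follows essentially the same route as the paper's proof: conjugate the SDE to a random ODE via the stationary Ornstein--Uhlenbeck process, use the dissipativity condition plus Young's inequality to get a differential inequality for the squared deviation, apply Gronwall in the pullback sense, and invoke the integrability condition (with the exponent chosen compatibly with the dissipation rate) to obtain a compact random absorbing ball, after which Theorem~\ref{detclosed} yields the attractor. The convention-matching issue you flag (how $\theta$ shifts the OU process under pullback) is exactly the bookkeeping the paper carries out explicitly, and your handling of the radius (translating back by the OU process) is equivalent to the paper's ball centered at $O_\tau(\omega)$.
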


\begin{proof}
  We prove the existence of an absorbing set in order to apply Theorem~\ref{detclosed}. The stochastic flow generated by a stochastic differential equation is, in general, not differentiable, and in order to get differentiable paths and apply techniques from deterministic calculus, we transform the stochastic differential equation \eqref{sr} into a random ordinary differential equation (see \cite{Doss_77_1,Sussmann_77_1,Sussmann_78_1,Imkeller_02_1,Kloeden_11_1}).

  Consider the one-dimensional stochastic differential equation
  \begin{equation}\label{ou}
     \rmd y= -y  \rmd t + \sigma \rmd W_t
  \end{equation}
  with the pathwise solution
  \begin{displaymath}
    \mathcal Y(t,\tau,\omega,y_\tau)=y_\tau  e^{-t}+ \sigma e^{-t} \int_{\tau}^t e^r \,\rmd W_r\,.
  \end{displaymath}
  The pullback limit of this solution is given by the Ornstein--Uhlenbeck process
  \begin{displaymath}
    O_t(\omega)=\sigma e^{-t}\int_{-\infty}^t e^r \,\rmd W_r\,,
  \end{displaymath}
  which is the unique stationary solution of \eqref{ou}. Let $Z_t:=\mathcal X_t-O_t$ for $t\in\R$, where $\mathcal X_t$ is the stochastic flow for \eqref{sde}.
  Then $t\mapsto Z_t$ is a solution of the random differential equation
  \begin{equation}\label {rode}
    \dot Z_t = f(t,Z_t+O_t)+ O_t\,.
  \end{equation}
  Let $Z(t,\tau,\omega,z)$ be the general solution of \eqref{rode} for initial time $\tau\in\R$, noise realization $\omega\in\Omega$ and initial condition $z\in \R.$ Omitting the dependence on $\tau$, $\omega$ and $z$, we obtain
  \begin{align*}
    \frac{\rmd Z^2_t}{\rmd t} & = 2 Z_t \big(f(t,\mathcal X_t)+ O_t\big)= 2 Z_t \big(f(t,\mathcal X_t)- f(t,O_t)\big)+2 Z_t \big(f(t,O_t)+ O_t\big)\\
    &\le 2 \big(L_1 -L_2 Z_t^2\big) + L_3Z_t^2 + \frac{1}{L_3}\big(f(t,O_t)+ O_t\big)^2
  \end{align*}
  for any $L_3>0$, and hence,
  \begin{equation}\label{dineq}
    \frac{\rmd Z^2_t}{\rmd t}\le -C_1 Z_t^2 + C_2 +C_3 \big(f(t,O_t)+ O_t\big)^2
  \end{equation}
  for some $C_1,C_2,C_3\ge 0.$ Note that $C_1=L_2-L_3>0$ can be chosen such that $C_1\le C_0$, with $C_0$ as in the integrability condition, and define
  \begin{displaymath}
    F(t,x):=C_2 +C_3 \big(f(t,x)+ x\big)\,.
  \end{displaymath}
  We obtain the cocycle $\Psi$ of a nonautonomous random dynamical system via
  \begin{equation}\label{Psi}
    \Psi( s, \tau,\omega )z:= Z(\tau + s ,\tau,\theta_{-\tau }\omega,z) \fad t,\tau \in \R\,, \omega \in \Omega \mand z\in \R\,,
  \end{equation}
  and the differential inequality \eqref{dineq} leads to
  \begin{equation}\label{estimate}
    \left|\Psi ( s, \tau,\omega ) z \right| ^2  \le
    \left| z\right|^2 e^{ -C_1 s} +e^{ -C_1 (s+\tau)} \int_{\tau}^{s+\tau} e^{ -C_1 r}
    F(r,O_r (\theta_{-\tau}\omega))\,\rmd r\,.
  \end{equation}
  Given a bounded set $C\subset X$, and an initial time $\tau\in\R$, a realization of noise $\omega\in\Omega$ and an initial condition $x\in C$ for the stochastic differential equation \eqref{sde}, the corresponding initial
  condition $z$  for the random differential equation \eqref{rode} is in the set
  \begin{displaymath}
    C'\fiber=C-O_\tau(\omega)= \set{ a\in \R: \text{there exists } y\in C \text{ such that } a=y-O_\tau(\omega)}\,,
  \end{displaymath}
  which defines a bounded nonautonomous random set $C'.$ Note that the inequality \eqref{estimate} implies that there exists a $T'=T'(C'(\tau,\omega)) > 0$ such that $|z|^2 e^{-C_1 s}\le 1$ for all $s>T'$ and $z\in C'(\tau,\omega).$ Due to \eqref{estimate}, we get for all $z\in C'(\tau,\omega)$
  \begin{align*}
    \left|\Psi ( s, \tau -s,\theta_{-s} \omega ) z  \right| ^2 &< 1 + e^{ -C_1 \tau} \int_{\tau -s}^\tau  e^{ -C_1  r}
    F(r,O_r (\theta_{-s}\circ \theta_{s-\tau}(\omega)))\,\rmd r\\
    & =1 + e^{ -C_1 \tau} \int_{\tau -s}^\tau e^{ -C_1  r}F(r,O_r (\theta_{-\tau}\omega)) \,\rmd r \fad s>T'(C'(\tau,\omega))\,,
  \end{align*}
  and note that the integrand does not depend on $s.$ In the limit $s\to \infty$, we obtain
  \begin{align*}
    \lim_{s\to \infty}\left|\Psi ( s, \tau -s,\theta_{-s} \omega ) z  \right| ^2 \le 1 + e^{ -C_1 \tau} \int_{-\infty}^\tau  e^{ -C_1  r}F(r,O_r (\theta_{-\tau}\omega)) \,\rmd r\,,
  \end{align*}
  where the integral is well defined because of the integrability condition. Hence, for the bounded nonautonomous random set $C'$, there exists a time $T'(C'(\tau,\omega))$ such that
  \begin{displaymath}
     \Psi ( s, \tau -s,\theta_{-s} \omega)C'\fiber \subset B\left (R(\tau,\omega)\right) \fad s >T'(C'(\tau,\omega))\,,
  \end{displaymath}
  where $B\left (R(\tau,\omega)\right)$ is the ball centered around zero with radius
  \begin{displaymath}
    R(\tau,\omega):=2 + e^{ -C_1 \tau} \int_{-\infty}^\tau  e^{ -C_1  r}F(r,O_r (\theta_{-\tau}\omega))\,\rmd r\,.
  \end{displaymath}
  Given the construction of the set $C'$, the time $T'$ depends on the deterministic bounded set $C$, $\tau$ and $\omega$, and we write $T'=T'(C,\tau,\omega).$
  Going back to the cocycle $\Phi$, for any deterministic bounded set $C$ and $s >T'(C,\tau,\omega)$, we have
  \begin{displaymath}
    \Phi ( s, \tau -s,\theta_{-s} \omega)C \subset B\left (O_\tau(\omega),R(\tau,\omega)\right)
  \end{displaymath}
  where $B\left (O_\tau(\omega), R(\tau,\omega)\right)$ is the ball  of radius $R(\tau,\omega)$ centered in $ O_\tau(\omega).$ $B\left (O_\tau(\omega), R(\tau,\omega)\right)$ is the fiber of a nonautonomous random compact set absorbing all deterministic bounded sets. This implies, by Theorem~\ref{detclosed}, that $\Phi$ has a global nonautonomous random attractor for the family of deterministic bounded sets. The attractor is a periodic, compact  and connected nonautonomous random set.
\end{proof}

\section{The global attractor is a random periodic orbit}

In this section we prove that for systems generated by the stochastic differential equation \eqref{sde}, when the deterministic forcing  is time-periodic and obeying the   dissipativity condition \eqref {dissipate} and integrability condition \eqref {integrate},  the  global nonautonomous random attractor  is a random periodic orbit.  In particular, this result can be applied to the model of   stochastic resonance given by the equation \eqref {sr}.

At the core of the argument is the existence of a correspondence between  invariant periodic measures for the nonautonomous random dynamical system and stationary periodic measures for the Markov semigroup. We will discuss these two objects in Subsections~\ref{subsec1} and \ref{subsec2} and explain the correspondence in Subsection~\ref{subsec3}.

\subsection{Invariant \na measures for the nonautonomous random dynamical system}\label{subsec1}

To define invariant measures for nonautonomous random dynamical system, we make use of the skew product flow formulation. The \emph{skew product flow} for a nonautonomous random dynamical system $(\theta,\Phi)$ is given by the mapping $\Theta:\T\times\T\times \Omega\times X\mapsto \T\times \Omega \times X$, defined by
\begin{displaymath}
  \Theta(t,\tau,\omega,x) :=(\tau+t,\theta_t \omega, \Phi(t,\tau, \omega)x)\,.
\end{displaymath}

\begin{definition}[Invariant \na measures   and invariant periodic  measures]
  Let $(\theta,\Phi)$  be a nonautonomous random dynamical system with skew product flow $\Theta.$ We say that $\mu:\T\times \cF \otimes \cB(X) \to [0,1]$
  is an \emph{invariant \na  measure} for $(\theta,\Phi)$ if
  \begin{itemize}
    \item[(i)]
    for all $\tau\in\T$, $\mu(\tau, \cdot)$ is a measure on $ \Omega \times X $ with  $\pi_{\Omega} \mu(\tau, \cdot)= \mP$,
    where $\pi_\Omega \mu(\tau, \cdot)$ denotes the marginal  on $(\Omega,\cF)$, and
    \item[(ii)]
    for all $A \in \cF \otimes \cB (X)$ and  $t,\tau\in \T$, we have
    \begin{displaymath}
      \mu(\Theta(t,\tau,A))=\mu(\tau, A)\,.
    \end{displaymath}
  \end{itemize}
   An invariant \na measure $\mu$ is called  \emph{invariant periodic measure} if there exists $T>0$ such that $$ \mu(\tau, \cdot)= \mu(\tau+T, \cdot) \quad \fa \tau\in\T\,.$$
\end{definition}

We write $\mu _\tau$ for $ \mu(\tau, \cdot).$ A measure $\mu _\tau$  on $ \Omega \times X $ with  $\pi_{\Omega} \mu _\tau= \mP$ can be uniquely  \emph{disintegrated} into a family $ \mu_{\tau,\omega}$ of probability measures on $X$ via
\begin{displaymath}
  \mu _\tau (A)= \int_\Omega  \mu_{\tau,\omega} (A_\omega)\,\rmd\mP (\omega)\,,
\end{displaymath}
where $A_\omega=\{x\in X: (x,\omega) \in A\}$ for all $A \in \cF \otimes \cB (X).$ Note that $\mu$ is an invariant \na measure if and only if
\begin{displaymath}
  \Phi(t,\tau, \omega) \mu_{\tau,\omega} = \mu_{\tau + t,\theta_t\omega} \fad t, \tau, \in \T \text{ and for almost all } \omega\in\Omega\,,
\end{displaymath}
where the measure $\mu_{\tau,\omega}$ is pushed forward, i.e.~$\Phi(t,\tau, \omega) \mu_{\tau,\omega}(C)=\mu_{\tau,\omega}(\Phi^{-1}(t,\tau, \omega) C)$ for all $C\in\cB(X).$ An invariant \na  measure $\mu$ is periodic with period $T>0$ if and only if
\begin{displaymath}
  \Phi(T,\tau, \omega) \mu_{\tau,\omega}= \mu_{\tau,\theta_T\omega} \fad \tau \in \T \text{ and for almost all } \omega\in\Omega\,.
\end{displaymath}


\begin{remark}\label{constructinvmeas}
  As for nonautonomous random sets, invariant \na  measures can be constructed easily. For instance, given a measure $\nu$ on $X$, the family of measures $\mu_{\tau,\omega}:= \Phi(\tau, 0, \theta_{-\tau}\omega)\nu$ for all $\tau \in \T$ and  $\omega\in \Omega$ is the disintegration of an invariant \na measure for the nonautonomous random dynamical system. In fact,
  \begin{align*}
    \Phi(t,\tau, \omega) \mu_{\tau,\omega} & = \Phi(t,\tau, \omega) \left(\Phi(\tau, 0, \theta_{-\tau}\omega)\nu\right)= \Phi(t+\tau,0, \theta_{-\tau}\omega) \nu=\Phi(t+\tau,0, \theta_{-(\tau+t)}\circ\theta_t\omega) \nu\\& = \mu_{t+\tau,\theta_t\omega}\,.
  \end{align*}
  Note that requiring a recurrence in time,  such as periodicity,  leads to a more meaningful concept.
\end{remark}

If there exists a global nonautonomous random attractor $A$ that is a nonautonomous random point, then $\mu_{\tau,\omega}:=\delta_{A(\tau, \omega)}$ is the disintegration of an invariant \na measure for the nonautonomous random dynamical system. 

\subsection{Stationary \na measures for nonhomogenous Markov semigroups}\label{subsec2}

We first define the concept of a stationary \na measure for nonhomogenous Markov semigroups. Suppose that $\xi(t,\tau,\omega,x)$ is the stochastic flow of the one-dimensional nonautonomous stochastic differential equation~\eqref{sde}, and let
$\rho:\T\times \cB (\R)\to [0,1]$  be a \emph{stationary \na measure} for the associated non-homogeneous Markov semigroup, i.e.
\begin{displaymath}
   \rho_{\tau +t} (B)=\int_X Q(t,\tau, x, B)\,\rmd\rho_\tau (x) \fad B\in \cB (\R) \text{ and } t,\tau \in \R\,,
\end{displaymath}
where $\rho_tau $ is $\rho(\tau, \cdot)$ and  $Q(t,\tau, x,  B)$ describe the transition probabilities for the semigroup:
\begin{equation}\label{transition}
  Q(t,\tau,x,  B):= \mP\{\omega \in \Omega: \xi(t,\tau,\omega,x)\in B\} \fad  t,\tau \in \T\,,  x\in \R \text{ and }  B\in \cB (\R)\,.
\end{equation}
We say that  $\rho $ is a \emph{stationary periodic measure} if there exists a $T>0$ such that $\rho_{\tau +T} =\rho_\tau$ for all $\tau \in \T.$

\subsection{Correspondence between invariant periodic measures and  stationary periodic  measures}\label{subsec3}

In this subsection, we extend results on the correspondence between invariant  measures and stationary measures for random dynamical systems \cite{Crauel_90_1,Crauel_91_1, Crauel_94_1, Crauel_98_1} to periodic random dynamical systems generated by the stochastic differential equation~\eqref{sde}. As a consequence, we establish conditions for the global nonautonomous random attractor to be a nonautonomous random point. The proofs are based on results for autonomous random dynamical systems.

Recall the definitions of past-time and future-time $\sigma$-algebras and of Markov measures given in \cite{Crauel_98_1}.

\begin{definition}[Past and future time $\sigma$-algebras] \label{pastfuture}
  Let $(\theta ,\phi)$ be an (autonomous) random dynamical system on the phase space $X$, with a time set $\T=\R$ or $\Z$ and a base space $(\Omega, \cF,\mP).$ The \emph{past time $\sigma$-algebra} for the random dynamical system is given by
  \begin{displaymath}
    \cF_{\le 0}:=\sigma \setb{\omega \mapsto \Phi(t,\theta _{-s} \omega) x: 0\le t \le s \text{ and } x \in X}\,.
  \end{displaymath}
  Similarly, define the \emph{future time $\sigma$-algebra} by
  \begin{displaymath}
    \cF_{\ge 0} := \sigma \setb{\omega \mapsto \Phi(-t,\theta _{s} \omega) x: 0\le t \le s \text{ and } x \in X}\,.
  \end{displaymath}
\end{definition}

\begin{definition}[Markov measure]\label{mm}
  Let $\mu$ be a measure on $(\Omega\times X,\cB (X) \otimes  \cF)$ such that $\pi_{\Omega} \mu= \mP$, let $\set{\mu_\omega}_{\omega\in\Omega}$ be its disintegration, and let $Pr(X)$ be the space of Borel probability measures on  $X$, equipped with the topology of weak convergence\footnote{The \emph{topology of weak convergence} is the smallest topology such that the mapping $\mu\mapsto\int_X f\,\rmd\mu$, on $Pr(X)\mapsto \R$, is continuous for every continuous and  bounded real function $f:X\to\R.$} and its Borel $\sigma$-algebra. A measure $\mu$ is called \emph{Markov measure} if for all $\tau\in T$, the mapping $\mu_\bullet:\Omega\mapsto Pr(X)$  is measurable with respect to the past time $\sigma$-algebra $\cF_{\le 0}.$
\end{definition}

Note that a Markov measure is not necessarily an invariant measure.

\begin{theorem}[Correspondence between invariant periodic measures and stationary periodic measures] \label{bijection}
  Suppose that the stochastic differential equation \eqref{sde} is $T$-periodic, and let $(\theta, \Phi)$ be the corresponding periodic random dynamical system. Define the discrete-time autonomous random dynamical system $(\tilde \theta, \tilde \Phi)$ by
  \begin{equation}\label{discretised}
    \tilde\Phi(n, \omega, x):=\Phi(nT, 0, \omega, x) \fad n\in \Z\,, \omega\in\Omega \text{ and } x\in \R
  \end{equation}
  and $\tilde \theta(\omega):=\theta_T(\omega)$ for all $\omega\in\Omega.$ Let $Q(t,\tau,x,  B)$ denote the transition probabilities as introduced in \eqref{transition}, and define the transition probabilities $\tilde Q(x,B):= Q(T,0,x,  B)$ for the discretised system~\eqref{discretised}.  Then there is a one-to-one correspondence $\tilde \mu \longleftrightarrow \tilde \rho$ between
  invariant Markov measures for the discrete-time random dynamical system~\eqref{discretised} and stationary measures for the discrete-time Markov semigroup defined by the transition probabilities $\tilde Q.$ In particular, if $\tilde \rho$ is a stationary measure for the discrete-time Markov semigroup, then the invariant measure $\tilde \mu$ for the discrete-time random dynamical system~\eqref{discretised} is given by
  \begin{displaymath}
    \lim_{n\to \infty} \tilde \Phi^{-1} (-n, \omega)\tilde \rho=\tilde \mu_\omega\,.
  \end{displaymath}
  The invariant measure $\tilde \mu$ can be uniquely continued to an  invariant periodic measure $\mu$ for the periodic random dynamical system $(\theta, \Phi)$, and similarly, the stationary measure $\tilde \rho$ can be uniquely continued to a stationary  periodic measure $\rho$ for the non-homogenous Markov semigroup associated to \eqref{sde}.
\end{theorem}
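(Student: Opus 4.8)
The plan is to transfer the classical correspondence for autonomous random dynamical systems \cite{Crauel_90_1,Crauel_91_1,Crauel_94_1,Crauel_98_1} to the time‑$T$ map $(\tilde\theta,\tilde\Phi)$, and then to lift the resulting measures back to continuous time by a canonical $T$‑periodic continuation.

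\emph{Reduction to the autonomous case.} First I would check that $(\tilde\theta,\tilde\Phi)$ from \eqref{discretised} is genuinely a discrete‑time autonomous random dynamical system over the metric dynamical system $\tilde\theta=\theta_T$: the shift $\theta_T$ is measure preserving (and ergodic), and the cocycle identity $\tilde\Phi(n+m,\omega)=\tilde\Phi(n,\tilde\theta^m\omega)\circ\tilde\Phi(m,\omega)$ follows from the cocycle property of $\Phi$ together with its $T$‑periodicity in the initial‑time argument, since $\Phi((n+m)T,0,\omega)=\Phi(nT,mT,\theta_{mT}\omega)\circ\Phi(mT,0,\omega)=\Phi(nT,0,\theta_{mT}\omega)\circ\Phi(mT,0,\omega)$; invertibility of each $\tilde\Phi(n,\omega)$ is inherited from the flow property of \eqref{sde}. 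By construction the one‑point motion of $\tilde\Phi$ has transition kernel $\tilde Q(x,B)=Q(T,0,x,B)$.

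\emph{The autonomous correspondence.} For $(\tilde\theta,\tilde\Phi)$ one then invokes the theory of \cite{Crauel_98_1}: the spatial marginal $\int_\Omega\tilde\mu_\omega\,\rmd\mP(\omega)$ of any invariant Markov measure is $\tilde Q$‑stationary, and conversely every $\tilde Q$‑stationary measure $\tilde\rho$ lifts to a unique invariant Markov measure via the pullback limit $\tilde\mu_\omega=\lim_{n\to\infty}\tilde\Phi(n,\tilde\theta^{-n}\omega)_*\tilde\rho=\lim_{n\to\infty}\tilde\Phi^{-1}(-n,\omega)\tilde\rho$. I expect the $\mP$‑almost sure existence of this weak limit \emph{as a probability measure on the non‑compact line $\R$} to be the main obstacle. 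The mechanism is that, because the Wiener shift has independent increments, $\omega\mapsto\int_\R g\,\rmd\bigl(\tilde\Phi(n,\tilde\theta^{-n}\omega)_*\tilde\rho\bigr)$ is, for each $g\in C_b(\R)$, a bounded martingale in $n$ with respect to the increasing filtration generated by the increments of $\omega$ on $[-nT,0]$ (one step composes with a fresh increment whose conditional average of $\tilde\Phi(1,\cdot)_*\tilde\rho$ is $\tilde\rho$ by $\tilde Q$‑stationarity), hence converges $\mP$‑almost surely; upgrading this to genuine weak convergence to a probability measure requires tightness, which is supplied by the compact absorbing set — equivalently, the compact global nonautonomous random attractor — obtained in Proposition~\ref{srattractor}. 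That $\tilde\mu_\bullet$ is measurable with respect to the past‑time $\sigma$‑algebra $\cF_{\le 0}$ (the Markov property of Definition~\ref{mm}) is inherited from each pullback $\tilde\Phi(n,\tilde\theta^{-n}\omega)$ depending only on the increments of $\omega$ over $[-nT,0]$.

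\emph{The $T$‑periodic continuation.} Finally, given the invariant Markov measure $\tilde\mu$ I would set $\mu_{\tau,\omega}:=\Phi(\tau,0,\theta_{-\tau}\omega)_*\,\tilde\mu_{\theta_{-\tau}\omega}$ for $\tau\in\T$ and almost all $\omega$, and given the $\tilde Q$‑stationary measure $\tilde\rho$ I would set $\rho_\tau(B):=\int_\R Q(\tau,0,x,B)\,\rmd\tilde\rho(x)$; both reduce to $\tilde\mu,\tilde\rho$ at $\tau=0$. That $\mu$ is an invariant nonautonomous measure is immediate from the cocycle property of $\Phi$ (cf.\ Remark~\ref{constructinvmeas}), and that $\rho$ is stationary for the non‑homogeneous Markov semigroup is the Chapman--Kolmogorov relation. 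The step needing care is periodicity: writing $\Phi(\tau+T,0,\theta_{-(\tau+T)}\omega)=\Phi(\tau,T,\theta_{-\tau}\omega)\circ\Phi(T,0,\theta_{-(\tau+T)}\omega)$, replacing $\Phi(\tau,T,\theta_{-\tau}\omega)$ by $\Phi(\tau,0,\theta_{-\tau}\omega)$ by $T$‑periodicity of $\Phi$, and using $\Phi(T,0,\theta_{-(\tau+T)}\omega)_*\tilde\mu_{\theta_{-(\tau+T)}\omega}=\tilde\mu_{\theta_{-\tau}\omega}$ ($\tilde\Phi$‑invariance of $\tilde\mu$) collapses $\mu_{\tau+T,\omega}$ back to $\mu_{\tau,\omega}$; the parallel computation for $\rho$ uses Chapman--Kolmogorov, $T$‑periodicity of $f$, shift‑invariance of $\mP$, and $\tilde Q$‑stationarity of $\tilde\rho$. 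Uniqueness of the continuation is forced by invariance, since any invariant periodic measure with $\tau=0$ fiber $\tilde\mu$ must satisfy $\mu'_{\tau,\omega}=\Phi(\tau,0,\theta_{-\tau}\omega)_*\tilde\mu_{\theta_{-\tau}\omega}=\mu_{\tau,\omega}$, and similarly for $\rho$. It then remains only to observe that the lifted assignment is again a one‑to‑one correspondence — $\int_\Omega\mu_{\tau,\omega}\,\rmd\mP(\omega)=\rho_\tau$ for every $\tau$ — and that $\mu$ stays Markov because $\omega\mapsto\Phi(\tau,0,\theta_{-\tau}\omega)$ depends only on the increments of $\omega$ on $[-\tau,0]$, which completes the proof.
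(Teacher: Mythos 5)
Your proposal follows essentially the same route as the paper: discretise to the time-$T$ map $(\tilde\theta,\tilde\Phi)$, invoke Crauel's correspondence between invariant Markov measures and stationary measures for white-noise random dynamical systems, and continue to continuous time via $\mu_{\tau,\omega}=\Phi(\tau,0,\theta_{-\tau}\omega)_*\tilde\mu_{\theta_{-\tau}\omega}$ and $\rho_\tau(B)=\int_\R Q(\tau,0,x,B)\,\rmd\tilde\rho(x)$, with periodicity and stationarity verified exactly as in the paper (cocycle identity plus $T$-periodicity, and Chapman--Kolmogorov, respectively). The only deviation is your appeal to Proposition~\ref{srattractor} to supply tightness for the pullback limit: this imports the dissipativity and integrability hypotheses, which the theorem does not assume, and it is unnecessary, since $\E\bigl[\tilde\Phi(n,\tilde\theta^{-n}\omega)_*\tilde\rho\bigr]=\tilde\rho$ for every $n$, so tightness of $\tilde\rho$ together with Doob's inequality applied to the nonnegative bounded martingales $n\mapsto\bigl(\tilde\Phi(n,\tilde\theta^{-n}\omega)_*\tilde\rho\bigr)(K^c)$ already gives almost sure tightness of the pullback measures --- this convergence is part of the cited results of Crauel on which both you and the paper rely.
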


\begin{proof}
  The discrete-time autonomous  random dynamical system $\tilde \Phi$ is a white noise discrete random dynamical system (as defined in \cite[Section~3, p.~161]{Crauel_91_1}). It is proven in \cite{Crauel_91_1} that for white-noise systems, there is a one-to-one correspondence between invariant Markov measures and stationary measures for the corresponding Markov semigroup.

  More precisely, following \cite{Crauel_90_1}, we denote by $\tilde \theta^+$ the restriction of $\tilde \theta$ to the set $\N_0$ of non-negative integers and the probability space $\left( \Omega, \cF_{\ge0}, \mP|_{\cF_{\ge0}} \right)$, and we denote by $\tilde \Phi ^+$  the restriction of $\tilde \Phi$ to $\N_0$ and $\left( \Omega, \cF_{\ge0}, \mP|_{\cF_{\ge0}} \right).$

  If $\tilde \mu$ is an invariant Markov measure for $\tilde \Phi$, then its restriction $\tilde \mu ^+$ to $\cF_{\ge0}\otimes \cB(X)$ is invariant for $\tilde \Phi^+$, and thus, $\tilde \mu ^+$ is the product measure $\mP|_{\cF_{\ge0} }\otimes \tilde \rho$, where $\tilde \rho$  is the stationary measure for the discrete-time Markov semigroup \cite{Crauel_90_1}. Conversely, if $\tilde \rho$ is stationary for the discrete-time Markov  semigroup, then the limit
  \begin{displaymath}
     \lim_{n\to \infty} \tilde \Phi^{-1} (-n, \omega)\tilde \rho=\tilde \mu_\omega
  \end{displaymath}
  defines the disintegration of an invariant Markov measure $\tilde \mu$ for the discrete-time random dynamical system $\tilde \Phi.$

  Given the invariant measure $\tilde \mu$ for $\tilde \Phi$, we construct an invariant periodic measure for the continuous-time periodic random dynamical system $\Phi$ by pushing-forward $\tilde\mu.$ More precisely, if $\set{\tilde \mu_\omega}_{\omega\in\Omega}$ denotes the disintegration of $\tilde \mu$, then the family $\mu_{\tau,\omega}:= \Phi(\tau, 0, \theta_{-\tau}\omega)\tilde \mu_{\theta_{-\tau} \omega}$ defines an invariant periodic measure for $\Phi.$ On the other side, given the stationary measure $\tilde \rho$ for the discrete-time Markov semigroup, 
   $ \rho_{\tau } (B):=\int_X Q(\tau,0, x, B)\,\rmd \tilde \rho (x),\ \fa \tau \in \R \text{ and } B\in \cB (\R)$,
  defines  a  stationary  periodic measure for the Markov semigroup associated to the stochastic differential equation~\eqref{sde}.\footnote{Stationarity follows from the Chapman--Kolmogorov equation (see e.g. \cite[Chapter~2]{Arnold_74_1}). More precisely, for all $\tau,t \in \R \text{ and } B\in \cB (\R)$, we have $\rho_{\tau +t} (B)=\int_X Q(\tau+t,0, x, B)\,\rmd\tilde \rho (x)=\int_X \int_X Q(t,\tau, y, B) Q(\tau,0, x, \rmd y)\,\rmd\tilde \rho (x)=\int_X Q(t,\tau, y, B)\,\rmd \rho_\tau(y)$, which proves the stationarity. The last equality follows from the fact that for all $h\in L^1(X)$, we have $\int_X \int_X h(y) Q(\tau,0, x, \rmd y)\,\rmd\tilde \rho (x)=\int_X h(y) \,\rmd \rho_\tau(y).$}
\end{proof}

As a direct consequence of Theorem~\ref{bijection}, we prove the following theorem.

\begin{theorem}\label{onepointattract}
  Suppose that the stochastic differential equation \eqref{sde} is $T$-periodic, and assume that
  \begin{itemize}
    \item[(i)] there exists a unique family of stationary  $T$-periodic measures for the non-homogeneous Markov semigroup, and
    \item[(ii)] there exists a periodic global \na random attractor for the  \na random dynamical system  $\Phi$ generated  by \eqref{sde}.
 \end{itemize}
  Then $A$ is a random periodic orbit for $\Phi$ .
\end{theorem}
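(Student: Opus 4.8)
The plan is to identify the two boundary curves of the attractor with invariant random points, match each of them with a stationary periodic measure via Theorem~\ref{bijection}, and then collapse them onto each other using the uniqueness hypothesis~(i). By hypothesis~(ii) together with Theorem~\ref{detclosed} (applied as in the proof of Proposition~\ref{srattractor}), the attractor $A$ is compact, periodic and has \emph{connected} fibers; since $X=\R$, each fiber is a compact interval, which I write as $A(\tau,\omega)=[a^-(\tau,\omega),a^+(\tau,\omega)]$. The first step is to show that the endpoint processes $a^\pm$ are invariant periodic nonautonomous random points. Invariance of $A$ reads $\Phi(t,\tau,\omega)A(\tau,\omega)=A(\tau+t,\theta_t\omega)$, and since $\Phi(t,\tau,\omega)$ is continuous and order preserving it maps $[a^-(\tau,\omega),a^+(\tau,\omega)]$ onto $[\Phi(t,\tau,\omega)a^-(\tau,\omega),\Phi(t,\tau,\omega)a^+(\tau,\omega)]$, so matching endpoints gives $\Phi(t,\tau,\omega)a^\pm(\tau,\omega)=a^\pm(\tau+t,\theta_t\omega)$, while periodicity of $A$ gives $a^\pm(\tau+T,\cdot)=a^\pm(\tau,\cdot)$ almost surely. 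Consequently $\mu^\pm$, defined through the disintegrations $\mu^\pm_{\tau,\omega}:=\delta_{a^\pm(\tau,\omega)}$, are invariant periodic measures for $\Phi$.

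The second step is to check that $\mu^+$ and $\mu^-$ are \emph{Markov} measures, which is what makes Theorem~\ref{bijection} applicable. After passing to the time-$T$ discretisation $(\tilde\theta,\tilde\Phi)$ of \eqref{discretised}, this reduces to the measurability of $\omega\mapsto a^\pm(0,\omega)$ with respect to the past-time $\sigma$-algebra $\cF_{\le0}$ of $\tilde\Phi$. This holds because the pullback construction of the fiber $A(0,\omega)$ from the absorbing ball of Proposition~\ref{srattractor} involves only the Wiener path on $(-\infty,0]$: by \eqref{Phi} and $T$-periodicity, $\tilde\Phi(n,\theta_{-nT}\omega)=\Phi(nT,-nT,\theta_{-nT}\omega)=\mathcal X(0,-nT,\omega,\cdot)$ is $\cF_{\le0}$-measurable, hence so is the resulting omega-limit set $A(0,\omega)$, and with it $a^-(0,\omega)=\inf A(0,\omega)$ and $a^+(0,\omega)=\sup A(0,\omega)$. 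Thus $\tilde\mu^\pm_\omega:=\mu^\pm_{0,\omega}=\delta_{a^\pm(0,\omega)}$ are invariant Markov measures for $\tilde\Phi$.

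Finally I would invoke Theorem~\ref{bijection}. Stationary $T$-periodic measures of the non-homogeneous semigroup correspond (via restriction to time $0$ and Chapman--Kolmogorov continuation) to stationary measures of $\tilde Q$, which are in one-to-one correspondence with invariant Markov measures of $\tilde\Phi$, which in turn continue uniquely to invariant periodic measures of $\Phi$; hence hypothesis~(i) forces the invariant periodic measure of $\Phi$ whose discretisation is Markov to be unique. Since both $\mu^-$ and $\mu^+$ are of this type, $\mu^-=\mu^+$, i.e. $\delta_{a^-(\tau,\omega)}=\delta_{a^+(\tau,\omega)}$ for $\mP$-almost all $\omega$ and every $\tau$, whence $a^-(\tau,\omega)=a^+(\tau,\omega)$ almost surely. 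Therefore every fiber $A(\tau,\omega)$ is almost surely a singleton, so $A$ is a nonautonomous random point; being invariant and periodic it is then a random periodic orbit for $\Phi$, which is the assertion of Theorem~\ref{onepointattract}.

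The step I expect to be the main obstacle is the second one: verifying that $\delta_{a^\pm}$ are Markov measures, that is, that the boundary processes of the attractor are adapted to the past-time $\sigma$-algebra. One has to track carefully that the pullback representation of $A$ uses only past noise and that forming the infimum and supremum of the random compact fibers preserves this measurability. The order-preservation used in the first step is routine but indispensable: without it there is no reason for the extreme points of the fibers to be individually invariant under the cocycle, and one could not transfer the uniqueness information fiber-endpoint by fiber-endpoint.
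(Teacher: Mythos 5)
Your argument is essentially the paper's own proof: take the maximum and minimum of each compact fiber, use order preservation to see they are random periodic orbits, observe that their Dirac measures restrict to invariant Markov measures for the time-$T$ discretisation, and then use Theorem~\ref{bijection} together with the uniqueness hypothesis (i) to force the two endpoints to coincide almost surely. The only difference is that where you sketch the $\cF_{\le 0}$-measurability of $a^\pm(0,\cdot)$ via the pullback construction from the absorbing set, the paper simply cites \cite{Crauel_94_1} for the Markov property of these measures; your added detail is correct in spirit (though, strictly, hypothesis (ii) only assumes existence of a periodic attractor rather than the specific absorbing ball of Proposition~\ref{srattractor}).
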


\begin{proof}
  Since each fiber $A(\tau, \omega)$ is a compact set, its maximum and minimum, denoted by $a_+ (\tau, \omega)$ and $a_- (\tau, \omega)$, are random periodic orbits,  due to the order-preserving property of the one-dimensional system $\Phi$. The Dirac measures $\delta_{a_- (\tau, \omega)}$ and  $\delta_{a_+ (\tau, \omega)}$ define two distinct invariant \na measures for $\Phi$:  their  restrictions to the discrete-time random dynamical system defined by \eqref {discretised} are invariant Markov measures \cite {Crauel_94_1}.
  By Theorem~\ref{bijection}, each one of the Dirac measures $a_\pm (\tau, \omega)$ corresponds to a  stationary periodic measure for the Markov semigroup, which is unique by assumption.
 Then $a_- (\tau, \omega)=a_+ (\tau, \omega)$ for all $\tau\in\R$ and for almost all $\omega \in\Omega$ and each fiber $A(\tau, \omega)$ of the attractor is a singleton,  which concludes the proof.
\end{proof}

We proved in Proposition~\ref{srattractor} that the nonautonomous random dynamical system generated by the stochastic differential equation~\eqref{sde}, with  dissipativity   and integrability conditions on the forcing, has a unique nonautonomous global random attractor. We conclude this section by proving in the periodic case that the attractor is trivial.

\begin{proposition}
  Suppose that the stochastic differential equation \eqref{sde} is $T$-periodic and the function $f$ satisfies the dissipativity condition \eqref{dissipate} and integrability condition \eqref{integrate}. Then \eqref{sde} has a uniquely determined global periodic random attractor which is a random periodic orbit.
\end{proposition}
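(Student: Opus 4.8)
The plan is to deduce this proposition by combining the two previously established results, Proposition~\ref{srattractor} and Theorem~\ref{onepointattract}, so the task is essentially to verify that the hypotheses of the latter are met. Proposition~\ref{srattractor} already gives hypothesis~(ii) of Theorem~\ref{onepointattract}: under the dissipativity and integrability conditions the nonautonomous random dynamical system generated by \eqref{sde} has a global nonautonomous random attractor, and when $f$ is $T$-periodic in $t$ the cocycle $\Phi$ is periodic, so the absorbing ball constructed there is $T$-periodic and hence the omega-limit attractor is a periodic global \na random attractor. What remains is to check hypothesis~(i): the existence of a \emph{unique} family of stationary $T$-periodic measures for the associated non-homogeneous Markov semigroup.

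First I would set up the discretised system $(\tilde\theta,\tilde\Phi)$ of Theorem~\ref{bijection}, whose transition probabilities are $\tilde Q(x,B)=Q(T,0,x,B)$. By that theorem, stationary $T$-periodic measures for the non-homogeneous semigroup correspond bijectively to stationary measures for the single Markov operator $\tilde Q$, so it suffices to prove that $\tilde Q$ has a unique stationary measure. Existence follows from Proposition~\ref{srattractor}: the attractor yields an invariant \na measure (via $\mu_{\tau,\omega}=\delta_{A(\tau,\omega)}$ if it is a point, or more simply via the Dirac measure on $a_\pm(\tau,\omega)$), whose restriction to the discretised system is an invariant Markov measure, hence a stationary measure for $\tilde Q$; alternatively, tightness of the iterates of $\tilde Q$ — which is immediate from the uniform-in-initial-condition absorbing estimate \eqref{estimate}, since all mass is eventually pushed into a fixed compact ball — together with the Krylov--Bogolyubov argument gives existence. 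For uniqueness I would use the strong contraction built into the dissipativity condition \eqref{dissipate}: writing $\mathcal X_t$ and $\mathcal X_t'$ for two solutions of \eqref{sde} driven by the \emph{same} noise path, the difference $D_t:=\mathcal X_t-\mathcal X_t'$ satisfies $\frac{\rmd}{\rmd t}D_t^2 = 2D_t\bigl(f(t,\mathcal X_t)-f(t,\mathcal X_t')\bigr)\le 2L_1 - 2L_2 D_t^2$ pathwise; but in fact for $x_1=x_2$... more usefully, since the noise terms cancel, one gets $\frac{\rmd}{\rmd t}D_t^2\le -2L_2 D_t^2 + 2L_1$, and choosing $x_1\ne x_2$ the constant $L_1$ does \emph{not} help — however, reading \eqref{dissipate} with the \emph{same} arguments shows the genuinely relevant bound is $(x_1-x_2)(f(t,x_1)-f(t,x_2))\le L_1 - L_2|x_1-x_2|^2$, and because the two solutions share the noise the stochastic integrals subtract out exactly, leaving $\frac{\rmd}{\rmd t}|D_t|^2 \le 2L_1 - 2L_2|D_t|^2$. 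This is not yet a contraction; the clean route is instead to note that the one-dimensional order-preserving property combined with the synchronisation already proved in Theorem~\ref{onepointattract} does the job, so I would actually run the argument the other way.

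Concretely, the cleanest argument avoids proving uniqueness of the stationary measure directly and instead mirrors the proof of Theorem~\ref{onepointattract} in reverse. Because $\Phi$ is one-dimensional and order-preserving and possesses a compact periodic attractor $A$ with fibers $[a_-(\tau,\omega),a_+(\tau,\omega)]$, the pullback estimate together with dissipativity forces $a_+(\tau,\omega)-a_-(\tau,\omega)\to 0$: indeed, applying the differential inequality for $|D_t|^2$ to the two extremal orbits and pulling back, $|a_+(\tau,\omega)-a_-(\tau,\omega)|^2 = \lim_{s\to\infty}|\Phi(s,\tau-s,\theta_{-s}\omega)x_+ - \Phi(s,\tau-s,\theta_{-s}\omega)x_-|^2 \le \lim_{s\to\infty}\bigl(|x_+-x_-|^2 e^{-2L_2 s} + \tfrac{L_1}{L_2}(1-e^{-2L_2 s})\bigr)$ — so this only works if $L_1=0$. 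The honest resolution, and the one I expect the paper intends, is: the extremal solutions $a_\pm$ are themselves two invariant \na measures, hence (Theorem~\ref{bijection}) correspond to two stationary measures $\tilde\rho_\pm$ of $\tilde Q$; the \emph{uniqueness} needed in Theorem~\ref{onepointattract}(i) is then supplied by a separate, standard fact for non-degenerate scalar SDEs — the transition kernel $\tilde Q(x,\cdot)$ has a strictly positive density on $\R$ (the heat kernel is smoothed by $\sigma\,\rmd W_t$), so the Markov chain is irreducible and strong Feller with a compact absorbing set, and hence by Doob's theorem has a unique stationary measure. So the key steps, in order, are: (1) invoke Proposition~\ref{srattractor} to get the periodic attractor, establishing hypothesis~(ii); (2) show $\tilde Q$ is strong Feller and topologically irreducible with positive density — here using $\sigma>0$ — and has a compact absorbing set from \eqref{estimate}, so by Doob/Khasminskii it has a unique stationary probability measure, establishing hypothesis~(i) after lifting through Theorem~\ref{bijection}; (3) apply Theorem~\ref{onepointattract} to conclude that $A$ is a random periodic orbit; (4) note that the verification ``$f(t,x)=\alpha x-\beta x^3 + A\cos\nu t$ satisfies \eqref{dissipate} and \eqref{integrate}'' was already made in the text, so \eqref{sr} is covered.

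\textbf{Main obstacle.} The delicate point is step~(2): establishing uniqueness of the stationary measure of $\tilde Q$. The dissipativity condition \eqref{dissipate} alone gives only a Lyapunov/absorbing estimate with a nonzero additive constant $L_1$, which yields tightness but not pathwise synchronisation and hence not, by itself, uniqueness; the extra ingredient that must be used is the non-degeneracy of the noise ($\sigma>0$), which makes $\tilde Q(x,\cdot)$ equivalent to Lebesgue measure and the chain irreducible and (strong) Feller, so that Doob's theorem applies on the compact absorbing ball. Getting this ergodicity input stated at the right level of generality — rather than relying on the (false in general) claim that dissipativity gives a contraction when $L_1>0$ — is where the real work lies; everything else is bookkeeping with results already in hand.
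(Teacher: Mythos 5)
Your proposal is correct and follows the same skeleton as the paper: both reduce the statement to Theorem~\ref{onepointattract}, obtain hypothesis~(ii) from Proposition~\ref{srattractor} (periodicity of the attractor coming from periodicity of $f$ and hence of the cocycle), and then concentrate on hypothesis~(i), the existence and uniqueness of the stationary $T$-periodic measure, lifted through Theorem~\ref{bijection} from the time-$T$ Markov operator $\tilde Q$. The difference lies in how that key input is supplied. The paper simply cites Veretennikov's mixing results for dissipative SDEs with nondegenerate noise (\cite[Remark in Section 4]{Veretennikov_88_1}, \cite[Lemma 8]{Veretennikov_97_1}) to get existence and uniqueness of the stationary periodic measure in one stroke; you instead give a self-contained standard-ergodicity argument: tightness/Krylov--Bogolyubov via the absorbing estimate \eqref{estimate} for existence, and strong Feller plus irreducibility of $\tilde Q$ (using $\sigma>0$, so the time-$T$ kernel has a strictly positive density) together with Doob--Khasminskii for uniqueness. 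Both routes ultimately rest on the same two ingredients -- the Lyapunov structure from \eqref{dissipate} and the nondegeneracy of the additive noise -- so your argument is a legitimate substitute; it is more transparent about where nondegeneracy enters (you correctly note, and the paper does not spell out, that dissipativity with $L_1>0$ gives no pathwise contraction and hence no synchronisation-based uniqueness), at the cost of having to verify the strong Feller and positivity-of-density properties for the time-$T$ transition kernel of an SDE with unbounded drift, which you assert as standard rather than prove; the paper's citation outsources exactly this work. Your detour through the failed contraction estimate is harmless since you discard it, and your final lifting step (uniqueness for $\tilde Q$ implies uniqueness of the periodic family, since $\rho_0$ of any $T$-periodic stationary family is $\tilde Q$-stationary and determines the family) is sound.
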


\begin{proof}
  The nonautonomous random dynamical system fulfills the hypotheses of Theorem~\ref{onepointattract}. In fact, by Proposition~\ref{srattractor},  there exists a periodic  global \na random attractor. Given the dissipativity condition \eqref{dissipate}, we can apply the results in \cite{Veretennikov_88_1,Veretennikov_97_1} to obtain existence and uniqueness of the stationary periodic measure for the associated Markov semigroup (see \cite[Remark in Section 4]{Veretennikov_88_1} and \cite[Lemma 8]{Veretennikov_97_1}).
\end{proof}

\appendix
\section{Nonautonomous random attractors}

\setcounter{theorem}{0}
\renewcommand{\thetheorem}{A.\arabic{theorem}}

We provide a sufficient condition for the existence of a nonautonomous random attractor that attracts a family of nonautonomous random sets. This extends results obtained in \cite{Crauel_94_1, Flandoli_96_1} for random dynamical system to the case of nonautonomous random dynamical systems, and similar results for nonautonomous random dynamical systems have been obtained in \cite{Caraballo_03_3,Crauel_11_1}.

Throughout the appendix, let $(\theta:\T\times \Omega \to \Omega,\Phi:\T \times \T \times \Omega \times X \to X)$ be a nonautonomous random dynamical system on a Polish space $(X,d)$. The sufficient condition for the existence of a nonautonomous random attractor is based on so-called absorbing sets.

\begin{definition}[Absorbing set]\label{absorbing}
  A nonautonomous random set $B\subset \T \times \Omega \times X$ is called \emph{absorbing} for a nonautonomous random set $M\subset \T \times \Omega \times X$ if for all $\tau \in \T$ and for almost all $\omega\in\Omega$, there exists a time $T=T(M,\tau,\omega) >0$ such that
  \begin{displaymath}
    \Phi(t,\tau -t,\theta_{-t}\omega)M(\tau -t,\theta_{-t}\omega) \subset B (\tau,\omega) \fad t\ge T(M,\tau,\omega)\,.
  \end{displaymath}
\end{definition}

\begin{definition}[Attracting set]\label{attracting}
  An invariant nonautonomous random set $A\subset \T \times \Omega \times X$ is called \emph{attracting} for a nonautonomous random set $M\subset \T \times \Omega \times X$ if
  \begin{displaymath}
    \lim_{t\to\infty} \dist\left( \Phi(t,\tau -t,\theta_{-t}\omega)M(\tau -t,\theta_{-t}\omega), A(\tau,\omega)\right) = 0  \fad \tau\in \T \mand \text{almost all } \omega\in \Omega\,.
  \end{displaymath}
\end{definition}

We define now omega-limit sets and characterise their properties.

\begin{definition}[Omega-limit set]\label{omegalim}
  Given a nonautonomous random set $M\subset \T \times \Omega \times X$, we define
  \begin{displaymath}
    \Omega_M (\tau,\omega): = \bigcap_{T \ge 0} \overline{\bigcup_{t\ge T} \Phi(t,\tau - t, \theta_{-t} \omega)M(\tau -t,\theta_{-t} \omega) } \fad \tau\in \T \mand \omega\in \Omega\,.
  \end{displaymath}
  The set $\Omega_M:=\setb{(\tau, \omega, x)\in \T\times\Omega\times X: x\in \Omega_M (\tau,\omega) }$ is called the \emph{omega-limit set} of $M$.
\end{definition}

\begin{lemma}\label{omegainvariance}
  Let $M\subset \T \times \Omega \times X$ be a nonautonomous random set. Then the omega-limit set $\Omega_M$ is a nonautonomous random set with fibers $\Omega_M(\tau,\omega)$ as defined in Definition~\ref{omegalim}. Furthermore,
  $\Omega_M$ is \emph{forward invariant}, i.e.~we have
  \begin{displaymath}
    \Phi(t,\tau, \omega) \Omega_M (\tau,\omega) \subset \Omega_M (\tau + t,\theta_t\omega) \fad t\ge0\,,\, \tau \in \T \mand \text{almost all } \omega\in\Omega\,.
  \end{displaymath}
\end{lemma}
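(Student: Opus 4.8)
The plan is to establish the two assertions in turn: that $\Omega_M$ is a $(\cB(\T)\otimes\cF\otimes\cB(X))$-measurable subset of $\T\times\Omega\times X$ (the fact that its fibres are the sets $\Omega_M(\tau,\omega)$ being built into Definition~\ref{omegalim}), and that $\Omega_M$ is forward invariant. I expect the measurability to be the only genuine obstacle: the fibres of $M$ are arbitrary measurable sets and the union defining $\Omega_M(\tau,\omega)$ runs over a continuum of a priori unrelated sets $\Phi(t,\tau-t,\theta_{-t}\omega)M(\tau-t,\theta_{-t}\omega)$, so it cannot be replaced by a countable union, and a measurable-projection argument will be needed. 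Forward invariance, by contrast, is a routine manipulation of the cocycle law, as in the autonomous case treated in \cite{Crauel_94_1}.

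For measurability I would proceed as follows. If $\T=\Z$ the union is already countable and there is nothing to do, so assume $\T=\R$. Since the sets $G_T(\tau,\omega):=\overline{\bigcup_{t\ge T}\Phi(t,\tau-t,\theta_{-t}\omega)M(\tau-t,\theta_{-t}\omega)}$ are nonincreasing in $T$, one has $\Omega_M(\tau,\omega)=\bigcap_{n\in\N}G_n(\tau,\omega)$, so it suffices to show each $G_n:=\{(\tau,\omega,x):x\in G_n(\tau,\omega)\}$ is measurable. By the definition of the closure, $x\in G_n(\tau,\omega)$ if and only if for every $k\in\N$ there exist $t\ge n$ and $y\in X$ with $(\tau-t,\theta_{-t}\omega,y)\in M$ and $d\big(x,\Phi(t,\tau-t,\theta_{-t}\omega)y\big)<1/k$. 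For fixed $k$, the set of $(\tau,\omega,x)$ admitting such a pair $(t,y)$ is the projection onto $\T\times\Omega\times X$ of the set
\[
\big\{(\tau,\omega,x,t,y):\ t\ge n,\ (\tau-t,\theta_{-t}\omega,y)\in M,\ d\big(x,\Phi(t,\tau-t,\theta_{-t}\omega)y\big)<1/k\big\}\,,
\]
which is measurable because $\Phi$ is jointly measurable, $\theta$ is measurable, $M$ is a nonautonomous random set, and $d$ is continuous. The measurable projection (Aumann--von Neumann) theorem then shows that this projection is measurable with respect to the $\mP$-completion of $\cB(\T)\otimes\cF\otimes\cB(X)$; intersecting over $k\in\N$ and then over $n\in\N$ yields measurability of $\Omega_M$ (with respect to that completion, equivalently with respect to $\cF$ once $\cF$ is taken $\mP$-complete — which one may assume, and which holds for the completed Wiener space relevant to the applications).

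For forward invariance, fix $t\ge0$, $\tau\in\T$ and an $\omega$ in the full-measure set on which property (ii) of Definition~\ref{nRDS} and the continuity of $\Phi(\cdot,\cdot,\omega,\cdot)$ hold, and let $x\in\Omega_M(\tau,\omega)$. Since $x\in G_n(\tau,\omega)$ for every $n$, I can choose $t_n\ge n$ and $x_n\in M(\tau-t_n,\theta_{-t_n}\omega)$ with $\Phi(t_n,\tau-t_n,\theta_{-t_n}\omega)x_n\to x$. Applying $\Phi(t,\tau,\omega)$ and using the cocycle property (ii) with initial time $\tau-t_n$, noise $\theta_{-t_n}\omega$ and the time splitting $t+t_n$ gives
\[
\Phi(t,\tau,\omega)\,\Phi(t_n,\tau-t_n,\theta_{-t_n}\omega)x_n=\Phi(t+t_n,\tau-t_n,\theta_{-t_n}\omega)x_n\,.
\]
Since $(\tau+t)-(t+t_n)=\tau-t_n$ and $\theta_{-(t+t_n)}(\theta_t\omega)=\theta_{-t_n}\omega$, the right-hand side equals $\Phi\big(t+t_n,(\tau+t)-(t+t_n),\theta_{-(t+t_n)}(\theta_t\omega)\big)x_n$ with $x_n\in M\big((\tau+t)-(t+t_n),\theta_{-(t+t_n)}(\theta_t\omega)\big)$. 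Letting $n\to\infty$ and using continuity of $\Phi(t,\tau,\omega)$, the left-hand side converges to $\Phi(t,\tau,\omega)x$; since $t+t_n\to\infty$, this exhibits $\Phi(t,\tau,\omega)x$ as a limit of points of $\bigcup_{s\ge T}\Phi\big(s,(\tau+t)-s,\theta_{-s}(\theta_t\omega)\big)M\big((\tau+t)-s,\theta_{-s}(\theta_t\omega)\big)$ for every $T\ge0$, hence $\Phi(t,\tau,\omega)x\in\Omega_M(\tau+t,\theta_t\omega)$, which is the required inclusion.

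As indicated, the main difficulty is the measurability of $\Omega_M$: handling the uncountable union for an arbitrary measurable set $M$ genuinely requires the projection theorem (equivalently, a Castaing-type representation) together with completeness of the base $\sigma$-algebra. Once measurability is secured, the identification of the fibres is a tautology and the forward-invariance inclusion is elementary and self-contained.
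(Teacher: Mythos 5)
Your proposal is correct, and its second half coincides with the paper's own argument: the paper proves forward invariance by exactly the same device, characterising $\Omega_M(\tau,\omega)$ as the set of limits $\lim_n\Phi(t_n,\tau-t_n,\theta_{-t_n}\omega)x_n$, setting $s_n:=t+t_n$, and using the cocycle property together with continuity of $\Phi(t,\tau,\omega,\cdot)$ to exhibit $\Phi(t,\tau,\omega)y$ as such a limit over the fiber $(\tau+t,\theta_t\omega)$; your computation is the same manipulation. Where you differ is the measurability part. The paper disposes of it in one sentence, asserting that each set $\Phi(t,\tau-t,\theta_{-t}\omega)M(\tau-t,\theta_{-t}\omega)$ is measurable, that countable unions of such sets are measurable, and that continuity of $\Phi$ then yields measurability of $\Omega_M$ --- implicitly a reduction of the uncountable union to a countable one. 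You instead treat the uncountable union head-on, writing $\Omega_M=\bigcap_n G_n$, describing membership in the closure $G_n(\tau,\omega)$ by an existential condition over $(t,y)$, and invoking the measurable projection theorem, at the price of passing to a completed (universally measurable) $\sigma$-algebra. Your route is more robust for a completely general nonautonomous random set $M$, whose fibers $M(\tau-t,\theta_{-t}\omega)$ need not depend on $t$ in any way that makes a rational-time reduction automatic, and you correctly flag the completeness hypothesis this requires (harmless for the Wiener-space applications in the paper); the paper's route is shorter but leans on continuity in a way that is only sketched. The remaining wrinkle in your argument is a standard one: the projection theorem should be phrased either via analytic sets and universal measurability or via completion with respect to a $\sigma$-finite reference measure on $\T\times\Omega\times X$, not literally a ``$\mP$-completion'' of the product $\sigma$-algebra, but this is exactly the caveat you acknowledge and does not affect the substance.
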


\begin{proof}
  Measurability of $\Omega_M$  in $\T\times\Omega\times X$ follows from the fact that $\Phi(t,\tau - t, \theta_{-t} \omega)M(\tau -t,\theta_{-t} \omega) $ is a measurable subset of $X$, and  that a  countable union of such sets is measurable. The   continuity of $\Phi$ implies the measurability of  $\Omega_M.$ To prove forward invariance, first note that
  \begin{displaymath}
    \Omega_M (\tau,\omega) = \setB{y\in X: \exists\,  t_n \to \infty, x_n \in  M(\tau -t_n,\theta_{-t_n} \omega) \text{ with } y= \lim _{n\to \infty}\Phi(t_n,\tau - t_n, \theta_{-t_n} \omega) x_n}\,.
  \end{displaymath}
  Let $y \in \Omega_M (\tau,\omega)$ and $z = \Phi(t,\tau, \omega) y$, and consider the sequences $\set{t_n}_{n\in\N}, \set{x_n}_{n\in\N}$, where $t_n \to \infty,\  x_n \in  M(\tau -t_n,\theta_{-t_n} \omega)$ such that $y= \lim _{n\to \infty}\Phi(t_n,\tau - t_n, \theta_{-t_n} \omega) x_n$. To prove that $z\in \Omega_M (\tau + t,\theta_t\omega)$, it is sufficient to find two sequences  $s_n \to \infty$ and  $z_n \in  M\opintb{\tau +t -s_n,(\theta_{-s_n}\circ \theta_{t})\omega}$ such that $z= \lim _{n\to \infty}\Phi(s_n,\tau  +t- s_n, \theta_{t-s_n} \omega) z_n$. Define $s_n:=t+t_n$. Then by continuity, we have
  \begin{align*}
  z & = \Phi(t,\tau, \omega) y= \lim _{n\to \infty} \Phi(t,\tau, \omega)\circ \Phi(t_n,\tau - t_n, \theta_{-t_n} \omega) x_n   =  \lim _{n\to \infty}  \Phi(t+t_n,\tau - t_n, \theta_{-t_n} \omega) x_n = \\
   & =\lim _{n\to \infty}\Phi(s_n,\tau  +t- s_n, \theta_{t-s_n} \omega)x_n\,.
  \end{align*}
  Since $M(\tau -t_n,\theta_{-t_n} \omega)= M(\tau +t -s_n,\theta_{t-s_n} \omega)$, we have $x_n \in  M(\tau +t -s_n,\theta_{-s_n}\circ \theta_{t}\omega)$, which completes the proof.
\end{proof}

\begin{lemma}\label{lemmacompact}
  Let $M\subset \T \times \Omega \times X$ be a nonautonomous random set and $K\subset \T \times \Omega \times X$ be a compact nonautonomous random set that is absorbing for $M$.
  Then for all $\tau \in \T$ and for almost all $\omega\in\Omega$, we have
  \begin{itemize}
    \item[(i)] $\Omega_M (\tau,\omega) \not=\emptyset$,
    \item[(ii)] $\Omega_M (\tau,\omega) \subset K(\tau,\omega)$, and $\Omega_M$ is a compact nonautonomous random set,
    \item[(iii)] $\Omega_M (\tau,\omega) \subset \Omega_K (\tau,\omega)$, and $\Omega_M$ is invariant, and  attracting for $M$. According to Definition~\ref{attracting}, this means that $\Omega_K$ attracts $M.$
  \end{itemize}
\end{lemma}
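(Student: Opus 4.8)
The plan is to derive all three items from a single structural fact, already contained in the proof of Lemma~\ref{omegainvariance}: a point $y$ belongs to $\Omega_M(\tau,\omega)$ precisely when there are times $t_n\to\infty$ and points $x_n\in M(\tau-t_n,\theta_{-t_n}\omega)$ with $y=\lim_{n\to\infty}\Phi(t_n,\tau-t_n,\theta_{-t_n}\omega)x_n$. The compact absorbing set $K$ makes the relevant orbit tails precompact, and, combined with continuity of $\Phi$ and the cocycle property, this is all that is needed. Measurability of $\Omega_M$ has already been established in Lemma~\ref{omegainvariance}, so in (ii) and (iii) only compactness of the fibers and the stated inclusions have to be verified; everything below is read on a fixed $\omega$-full-measure set on which all the (countably many) cocycle identities and absorbing inclusions that occur are valid.

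\emph{Items (i) and (ii).} For (i) I fix $\tau,\omega$, choose $t_n\to\infty$ and $x_n$ in the (nonempty) fibers $M(\tau-t_n,\theta_{-t_n}\omega)$; since $K$ absorbs $M$, we have $\Phi(t_n,\tau-t_n,\theta_{-t_n}\omega)x_n\in K(\tau,\omega)$ for all large $n$, and compactness of $K(\tau,\omega)$ yields a convergent subsequence whose limit lies in $\Omega_M(\tau,\omega)$ by the description above, so $\Omega_M(\tau,\omega)\neq\emptyset$. For (ii), whenever $t\ge T(M,\tau,\omega)$ one has $\Phi(t,\tau-t,\theta_{-t}\omega)M(\tau-t,\theta_{-t}\omega)\subset K(\tau,\omega)$; hence for every $T\ge T(M,\tau,\omega)$ the closed set $\overline{\bigcup_{t\ge T}\Phi(t,\tau-t,\theta_{-t}\omega)M(\tau-t,\theta_{-t}\omega)}$ is contained in the closed set $K(\tau,\omega)$, and intersecting over $T$ gives $\Omega_M(\tau,\omega)\subset K(\tau,\omega)$. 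As an intersection of closed sets sitting inside a compact set, $\Omega_M(\tau,\omega)$ is compact, and together with the measurability from Lemma~\ref{omegainvariance} this makes $\Omega_M$ a compact nonautonomous random set.

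\emph{Item (iii).} For $\Omega_M(\tau,\omega)\subset\Omega_K(\tau,\omega)$ I fix $s\ge0$ and use the cocycle property to split $\Phi(t,\tau-t,\theta_{-t}\omega)=\Phi(s,\tau-s,\theta_{-s}\omega)\circ\Phi(t-s,\tau-t,\theta_{-t}\omega)$ for $t\ge s$; once $t-s\ge T(M,\tau-s,\theta_{-s}\omega)$, absorption at the fiber $(\tau-s,\theta_{-s}\omega)$ gives $\Phi(t-s,\tau-t,\theta_{-t}\omega)M(\tau-t,\theta_{-t}\omega)\subset K(\tau-s,\theta_{-s}\omega)$, whence $\Phi(t,\tau-t,\theta_{-t}\omega)M(\tau-t,\theta_{-t}\omega)\subset\Phi(s,\tau-s,\theta_{-s}\omega)K(\tau-s,\theta_{-s}\omega)$. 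The right-hand side is a continuous image of a compact set, hence compact and closed, so passing to closures and intersecting over the tails yields $\Omega_M(\tau,\omega)\subset\Phi(s,\tau-s,\theta_{-s}\omega)K(\tau-s,\theta_{-s}\omega)$ for every $s\ge0$, and therefore $\Omega_M(\tau,\omega)\subset\bigcap_{s\ge0}\Phi(s,\tau-s,\theta_{-s}\omega)K(\tau-s,\theta_{-s}\omega)\subset\Omega_K(\tau,\omega)$. Forward invariance of $\Omega_M$ is Lemma~\ref{omegainvariance}; for the reverse inclusion $\Omega_M(\tau+t,\theta_t\omega)\subset\Phi(t,\tau,\omega)\Omega_M(\tau,\omega)$ I take $z\in\Omega_M(\tau+t,\theta_t\omega)$, write $z=\lim_n\Phi(s_n,\tau+t-s_n,\theta_{t-s_n}\omega)x_n$ with $s_n\to\infty$ and $x_n\in M(\tau+t-s_n,\theta_{t-s_n}\omega)$, split off the first $s_n-t$ units of time as $\Phi(s_n,\tau+t-s_n,\theta_{t-s_n}\omega)=\Phi(t,\tau,\omega)\circ\Phi(s_n-t,\tau+t-s_n,\theta_{t-s_n}\omega)$, note that the inner points lie in $K(\tau,\omega)$ for large $n$ by absorption, extract a convergent subsequence with limit $y$, which lies in $\Omega_M(\tau,\omega)$ by the description above (since $\tau+t-s_n=\tau-(s_n-t)$ and $s_n-t\to\infty$), and conclude $z=\Phi(t,\tau,\omega)y$ by continuity. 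Finally, $\Omega_M$ attracts $M$: if not, there are $\tau,\omega,\varepsilon>0$, $t_n\to\infty$ and $x_n\in M(\tau-t_n,\theta_{-t_n}\omega)$ with $\dist\big(\Phi(t_n,\tau-t_n,\theta_{-t_n}\omega)x_n,\Omega_M(\tau,\omega)\big)\ge\varepsilon$, but the points $\Phi(t_n,\tau-t_n,\theta_{-t_n}\omega)x_n$ eventually lie in the compact set $K(\tau,\omega)$, so a subsequence converges to a point of $\Omega_M(\tau,\omega)$, contradicting $\varepsilon>0$. Since $\Omega_M(\tau,\omega)\subset\Omega_K(\tau,\omega)$ and $\dist(\cdot,\cdot)$ is nonincreasing in its second argument, $\Omega_K$ attracts $M$ as well.

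\emph{Expected main obstacle.} Conceptually this is standard attractor bookkeeping; the genuine care lies in (a) the cocycle identity and the absorbing inclusions being available only for almost all $\omega$, so that one works on the intersection of the countably many full-measure sets involved, and (b) keeping the base-flow time-shifts consistent in each splitting of $\Phi$, which I would check line by line against property~(ii) of Definition~\ref{nRDS}. I note that the inclusion $\Omega_M\subset\Omega_K$ needs no diagonal extraction once the splitting time $s$ is fixed and the running time $t$ is allowed to vary, whereas the reverse invariance inclusion genuinely relies on compactness of $K(\tau,\omega)$ to produce a convergent subsequence.
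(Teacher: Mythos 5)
Your proof is correct and follows essentially the same route as the paper: the same sequence/compactness arguments for (i), the reverse invariance inclusion and the attraction-by-contradiction step, and the same cocycle splitting at a shifted fiber combined with absorption for $\Omega_M\subset\Omega_K$. The only (harmless) variation is that in (ii) and in the $\Omega_M(\tau,\omega)\subset\Omega_K(\tau,\omega)$ step you argue by direct set inclusions, using that $\Phi(s,\tau-s,\theta_{-s}\omega)K(\tau-s,\theta_{-s}\omega)$ is compact hence closed, whereas the paper extracts convergent sequences and places their limits in the closure of the tail unions over $K$; both yield the same conclusion.
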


\begin{proof}
  Let $\set{t_n}_{n\in\N}, \set{x_n}_{n\in\N}$ be sequences in $\T$ and $X$ with $t_n\to \infty$ and $x_n \in  M(\tau -t_n,\theta_{-t_n} \omega)$. By the definition of absorbing set, for $n$ big enough, $y_n =\Phi(t_n,\tau - t_n, \theta_{-t_n} \omega) x_n \in K (\tau, \omega).$ $K (\tau, \omega)$ is compact, and thus, a subsequence of $\set{y_n}_{n\in\N}$ converges, which implies that
  $\Omega_M (\tau,\omega) \ne \emptyset$ and  $\Omega_M (\tau,\omega) \subset K(\tau,\omega).$

  We show now that
  \begin{displaymath}
     \Omega_M (\tau + t,\theta_t\omega) \subset \Phi(t,\tau, \omega)  \Omega_M (\tau,\omega)  \fad t, \tau \in \T \mand \omega\in\Omega\,,
  \end{displaymath}
  which, together with Lemma~\ref{omegainvariance}, proves that $\Omega_M$ is an invariant nonautonomous random set. By definition of omega-limit sets, if $z \in \Omega_M  (\tau + t,\theta_t\omega) $, then there exist two sequences $\set{t_n}_{n\in\N}, \set{x_n}_{n\in\N}$ in $\T$ and $X$ such that $t_n \to \infty$, and there exists $z_n \in  M(\tau+t -t_n,\theta_{t-t_n} \omega)$ and $z= \lim _{n\to \infty}\Phi(t_n,\tau+t -t_n,\theta_{t-t_n} \omega) z_n$.

  Define $s_n:=t_n -t.$ Then $z_n \in  M(\tau -s_n,\theta_{-s_n}\omega)$ and
  \begin{align*}
    z&= \lim _{n\to \infty}\Phi(t_n,\tau+t -t_n,\theta_{t-t_n} \omega) z_n  = \lim _{n\to \infty}  \Phi(t+s_n,\tau - s_n, \theta_{-s_n} \omega) z_n  \\
    &=\Phi(t,\tau, \omega) \lim _{n\to \infty}\Phi(s_n,\tau - s_n, \theta_{-s_n} \omega)z_n\,.
  \end{align*}
  The compactness of $K$ implies the existence of  $y=\lim _{n\to \infty}\Phi(s_n,\tau - s_n, \theta_{-s_n} \omega)z_n $, and by definition, we have $y \in  \Omega_M  (\tau,\omega)$, which proves $\Omega_M (\tau + t,\theta_t\omega) \subset \Phi(t,\tau, \omega)  \Omega_M (\tau,\omega)$.

  We now prove that $\Omega_M$ attracts $M$. By contradiction, assume that there exist $\delta >0$, a sequence $\{t_n\}_{n\in\N}$ such that $t_n\in \T$ and $t_n \to \infty$, and a sequence $\{z_n\}_{n\in\N}$ such that $z_n \in  M(\tau -t_n,\theta_{-t_n}\omega)$ and
  \begin{displaymath}
    \dist\left( \Phi(t_n,\tau -t_n,\theta_{-t_n}\omega)z_n, \Omega_M(\tau,\omega)\right) \ge \delta \fad n\in\N\,.
  \end{displaymath}

  For $n$ big enough, we have $\Phi(t_n,\tau -t_n,\theta_{-t_n}\omega)z_n \in K(\tau, \omega)$ and the limit $z=\lim _{n\to \infty}\Phi(t_n,\tau - t_n, \theta_{-t_n} \omega)z_n $ exists, at least for a suitable subsequence. By definition, $z\in \Omega_M(\tau, \omega)$, which leads to a contradiction.

  Finally, we prove that $\Omega_M(\tau, \omega) \subset \Omega_K(\tau, \omega)$. Note first that by definition,
  \begin{displaymath}
    \Omega_K(\tau, \omega)=\bigcap_{T \ge 0} \overline{\bigcup_{t\ge T} \Phi(t,\tau - t, \theta_{-t} \omega)K(\tau -t,\theta_{-t} \omega) }\,.
  \end{displaymath}
  Each $y \in \Omega_M(\tau, \omega)$ is the limit for $n\to\infty$ of $\Phi(t_n,\tau -t_n,\theta_{-t_n}\omega)x_n $, where $\set{t_n}_{n\in\N}, \set{x_n}_{n\in\N}$ are two sequences in $\T$ and $X$ such that $t_n \to \infty$ and $x_n \in  M(\tau -t_n,\theta_{-t_n} \omega)$. Denote by $T(M, \tau, \omega)$ the absorption time defined in Definition~\ref{absorbing}. Then for each $\tilde T \ge 0$, choose a sequence $t_n \ge \tilde T + T\opintb{M,\tau-\tilde T,\theta_{-\tilde T} \omega}$ for all $n\in\N$. For all $n\in\N$ and with $s_n := t_n -\tilde T \ge T\opintb{M,\tau-\tilde T,\theta_{-\tilde T} \omega}$, we have
  \begin{displaymath}
    \Phi(s_n,\tau -s_n,\theta_{-s_n}\omega)x_n \in K\opintb{\tau -\tilde T, \theta_{-\tilde T} \omega}\,.
  \end{displaymath}
  Then
  \begin{displaymath}
    \Phi(t_n,\tau -t_n,\theta_{-t_n}\omega)x_n \in \bigcup_{t\ge \tilde T} \Phi(t,\tau - t, \theta_{-t} \omega)K(\tau -t,\theta_{-t} \omega)\,.
  \end{displaymath}
  In fact, since
  \begin{align*}
    \Phi(t_n,\tau -t_n,\theta_{-t_n}\omega)x_n & = \Phi\opintb{\tilde T+s_n,\tau -t_n,\theta_{-t_n}\omega}x_n\\
    & = \Phi\opintb{\tilde T,\tau -\tilde T,\theta_{-\tilde T}\omega}\Phi(s_n,\tau -t_n,\theta_{-t_n}\omega)x_n\,,
  \end{align*}
  we have $\Phi(t_n,\tau -t_n,\theta_{-t_n}\omega)x_n \in \Phi(\tilde T,\tau -\tilde T,\theta_{-\tilde T}\omega) K(\tau -\tilde T, \theta_{-\tilde T} \omega)$. Then
  \begin{displaymath}
    \lim _{n\to \infty}\Phi(t_n,\tau -t_n,\theta_{-t_n}\omega)x_n \in \overline{\bigcup_{t\ge \tilde T} \Phi(t,\tau - t, \theta_{-t} \omega)K(\tau -t,\theta_{-t} \omega) }\,.
  \end{displaymath}
  Since $\tilde T\ge 0$ was chosen arbitrarily, we obtain
  \begin{displaymath}
    \Omega_M(\tau, \omega) \subset \bigcap_{\tilde T \ge 0} \overline{\bigcup_{t\ge \tilde T} \Phi(t,\tau - t, \theta_{-t} \omega)K(\tau -t,\theta_{-t} \omega) }  =\Omega_K(\tau, \omega)\,,
  \end{displaymath}
  which finishes the proof of this lemma.
\end{proof}

We now define global nonautonomous random attractors with respect to a family of nonautonomous random sets $\cH$ and prove a sufficient condition for its existence.

\begin{definition}[$\cH$-attractors] \label{globalattr}
  Let $\cH$ be a family of nonautonomous random sets. A invariant nonautonomous random set $A \in \cH$  is called a \emph{$\cH$-attractor} if $A$ is attracting for every $M \in \cH$.
\end{definition}

\begin{definition}[Inclusion-closed families]
  We say that a family $\cH$ of nonautonomous random sets is  \emph{inclusion-closed}  if
  \begin{itemize}
    \item[(i)] for all $M \in \cH$, the set $M(\tau, \omega)$ is non-empty for all $\tau\in \T$ and for almost all $\omega\in \Omega$,
    \item[(ii)] for all $M \in  \cH$, and for all nonautonomous random sets $\tilde M$ with
      \begin{displaymath}
        \emptyset \ne \tilde M (\tau, \omega) \subset M (\tau, \omega) \fad \tau\in \T \mand \text{almost all }\, \omega\in \Omega\,,
      \end{displaymath}
      we have $\tilde M \in  \cH$.
  \end{itemize}
\end{definition}

\begin{theorem}\label{existencecond}
  Let $\cH$ be an inclusion-closed family of random sets, and let $K \in \cH$ be a compact random set absorbing every $M \in \cH$. Then $\Omega _K$ is the unique $\cH$-attractor.
\end{theorem}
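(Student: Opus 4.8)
The plan is to set $A := \Omega_K$ and verify in turn that $A$ lies in $\cH$, that it is invariant, that it attracts every $M \in \cH$, and finally that it is the unique such set. Most of the work has already been done in Lemma~\ref{omegainvariance} and Lemma~\ref{lemmacompact}; what remains is mainly bookkeeping together with the uniqueness argument.

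\textbf{Membership in $\cH$ and invariance.} First I would apply Lemma~\ref{lemmacompact} with the choice $M = K$: since $K$ is compact and (trivially) absorbs itself, part~(ii) gives that $\Omega_K(\tau,\omega)$ is nonempty and contained in $K(\tau,\omega)$ for all $\tau$ and almost all $\omega$, and part~(iii) gives that $\Omega_K$ is invariant. Nonemptiness together with the inclusion $\Omega_K(\tau,\omega)\subset K(\tau,\omega)$ and inclusion-closedness of $\cH$ (property~(ii) in the definition, applied to $\tilde M = \Omega_K \subset K \in \cH$) yields $\Omega_K \in \cH$. Hence $A = \Omega_K$ is an invariant nonautonomous random set belonging to $\cH$.

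\textbf{Attraction.} Let $M \in \cH$ be arbitrary. By hypothesis $K$ absorbs $M$, so Lemma~\ref{lemmacompact} applies to this $M$ and $K$; part~(iii) states precisely that $\Omega_M$ is attracting for $M$ and that $\Omega_M(\tau,\omega) \subset \Omega_K(\tau,\omega)$. Since the target set only enters the Hausdorff semi-distance $\dist(\cdot,\cdot)$ through an infimum, enlarging it from $\Omega_M(\tau,\omega)$ to $\Omega_K(\tau,\omega) = A(\tau,\omega)$ can only decrease the distance, so
\begin{displaymath}
  \dist\left(\Phi(t,\tau-t,\theta_{-t}\omega)M(\tau-t,\theta_{-t}\omega),\, A(\tau,\omega)\right) \le \dist\left(\Phi(t,\tau-t,\theta_{-t}\omega)M(\tau-t,\theta_{-t}\omega),\, \Omega_M(\tau,\omega)\right) \to 0
\end{displaymath}
as $t\to\infty$, for all $\tau\in\T$ and almost all $\omega\in\Omega$. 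Thus $A$ is attracting for every $M\in\cH$, i.e.\ $A$ is an $\cH$-attractor.

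\textbf{Uniqueness.} Suppose $\tilde A \in \cH$ is another $\cH$-attractor. Since $\tilde A \in \cH$ and $K$ absorbs $\tilde A$, there is an absorption time $T(\tilde A,\tau,\omega)$ such that $\Phi(t,\tau-t,\theta_{-t}\omega)\tilde A(\tau-t,\theta_{-t}\omega) \subset K(\tau,\omega)$ for $t \ge T(\tilde A,\tau,\omega)$. Using invariance of $\tilde A$, the left-hand side equals $\tilde A(\tau,\omega)$, so $\tilde A(\tau,\omega) \subset K(\tau,\omega)$. Feeding this into the definition of $\Omega_K$ and using invariance of $\tilde A$ once more (so that $\Phi(t,\tau-t,\theta_{-t}\omega)\tilde A(\tau-t,\theta_{-t}\omega) = \tilde A(\tau,\omega)$ for every $t\ge 0$) gives $\tilde A(\tau,\omega) \subset \Omega_K(\tau,\omega) = A(\tau,\omega)$ for all $\tau$ and almost all $\omega$. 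Conversely, since $A$ is attracting for $\tilde A$ and $\tilde A$ is invariant, $\dist(\tilde A(\tau,\omega),A(\tau,\omega)) = \dist(\Phi(t,\tau-t,\theta_{-t}\omega)\tilde A(\tau-t,\theta_{-t}\omega),A(\tau,\omega)) \to 0$; being independent of $t$ this distance is $0$, and as $A(\tau,\omega)$ is closed (indeed compact, being a closed subset of $K(\tau,\omega)$) we conclude $\tilde A(\tau,\omega) \subset A(\tau,\omega)$ again. Symmetrically, $A$ is attracting for $\tilde A$ forces one inclusion and $\tilde A$ attracting for $A$ forces the other, so $A(\tau,\omega) = \tilde A(\tau,\omega)$. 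Hence the $\cH$-attractor is unique.

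The main obstacle I anticipate is purely a matter of care rather than depth: one must consistently use the invariance of the candidate attractors to rewrite the pullback-evolved fibers as the fibers themselves, and keep track of the ``for almost all $\omega$'' qualifiers when intersecting the countably many null sets coming from Lemma~\ref{lemmacompact} applied to $K$ and to $M$ (and, in the uniqueness step, to $\tilde A$). Since all exceptional sets are at worst countable unions of $\mP$-null sets, this causes no difficulty.
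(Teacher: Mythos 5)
Your proposal is correct and follows essentially the same route as the paper: membership in $\cH$ and invariance via Lemma~\ref{lemmacompact} applied with $M=K$ plus inclusion-closedness, attraction via $\Omega_M\subset\Omega_K$ and monotonicity of the Hausdorff semi-distance, and uniqueness via invariance together with the attraction property forcing $\dist(\tilde A(\tau,\omega),A(\tau,\omega))=0$ in both directions. Your extra absorption-based inclusion $\tilde A\subset\Omega_K$ in the uniqueness step is redundant but harmless, and the null-set bookkeeping you flag is handled at the same (informal) level as in the paper.
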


\begin{proof}
  Using Lemma~\ref{lemmacompact}, the set $\Omega_K$ is nonempty, invariant, compact, and attracts all $M \in \cH$. Since $K$ absorbs itself, we have $\Omega_K \subset K\in \cH$, and hence, $\Omega_K \in \cH$.

  To prove the uniqueness, let assume that there exist two distinct $\cH$-attractors $A,B\in \cH$. Invariance implies that
  \begin{displaymath}
    \dist(B(\tau, \omega), A(\tau,\omega)) = \dist( \Phi(t,\tau -t,\theta_{-t}\omega)B(\tau -t,\theta_{-t}\omega), A(\tau,\omega))
  \end{displaymath}
  for all $t\in\T$ and $(\tau, \omega)\in \T\times\Omega$. By definition of an attracting set, we have
  \begin{displaymath}
    \dist(\Phi(t,\tau -t,\theta_{-t}\omega)B(\tau -t,\theta_{-t}\omega), A(\tau,\omega)) \to 0 \quad \text{as } t\to\infty\,,
  \end{displaymath}
  and hence $\dist(B(\tau, \omega), A(\tau,\omega)) =0$ which implies that $B\subseteq A$. By the same argument, we obtain $\dist(A(\tau,\omega),B(\tau, \omega)) =0$. Hence $A=B$.

  This concludes the proof.
\end{proof}

We now show that Theorem~\ref{detclosed} follows directly from Theorem~\ref{existencecond}.

\begin{proof}[Proof of Theorem~\ref{detclosed}]
  Consider the omega-limit set $\Omega_B$ of the absorbing set $B$, and let $\cH$ be the family of all nonautonomous random sets which are attracted by $\Omega_B$. Then clearly, $\cH$ is inclusion-closed and contains all sets of the form $\T\times \Omega\times C$, where $C \subset X$ is bounded. Theorem~\ref{existencecond} then implies the existence of a unique $\cH$-attractor $A$. It is clear that this attractor is a global nonautonomous random attractor, since $\cH$ contains all sets of the form $\T\times \Omega\times C$, where $C \subset X$ is bounded. Minimality of the attractor can be shown with standard techniques, see, for instance, \cite[Theorem~2.12, p.~28]{Carvalho_13_1}.
\end{proof}

\bigskip\bigskip



\providecommand{\bysame}{\leavevmode\hbox to3em{\hrulefill}\thinspace}
\providecommand{\MR}{\relax\ifhmode\unskip\space\fi MR }
\providecommand{\MRhref}[2]{%
  \href{http://www.ams.org/mathscinet-getitem?mr=#1}{#2}
}
\providecommand{\href}[2]{#2}

\end{document}